\newtheorem{thm}{Theorem}[section]
\newtheorem{coro}[thm]{Corollary}
\newtheorem{defn}[thm]{Definition}
\newtheorem{rem}[thm]{Remark}
\newtheorem{prop}[thm]{Proposition}
\newcommand{\levy}{{L\'evy }}
\newcommand{\cadlag}{{c\`adl\`ag }}
\def\1{\mathbbm{1}}
\def\R{\mathbb{R}}
\def\P{\mathbb{P}}
\def\Q{\mathbb{P}}
\def\E{\mathbb{E}}
\def\N{\mathbb{N}}
\def\d{\,\mathrm{d}}
\def\dd{\mathrm{d}}
\def\var{\mathrm{Var}}
\def\cov{\mathrm{Cov}}
\def\b{\beta}
\def\s{\sigma}
\def\law{\overset{\textnormal{law}}{=}}
\def\tp{\top}
\def\F{\mathcal{F}}
\def\Borel{\mathcal{B}(\R)}
\def\t{\tau}
\def\D{\Delta}
\def\ab{\boldsymbol{\alpha}}
\def\xib{\boldsymbol{\xi}}
\def\sib{\boldsymbol{S}}
\def\sigmab{\boldsymbol{\sigma}}
\def\xb{\mathbf{x}}
\def\yb{\mathbf{y}}
\def\mb{\mathbf{m}}
\def\sumel{\mathbf{1}\cdot}
\begin{document}

\title{Generalised Liouville Processes and their Properties}

\author{Edward Hoyle\thanks{AHL Partners LLP, Man Group plc,
		London, EC4R 3AD, United Kingdom}, 
	Levent Ali Meng\"ut\"urk\thanks{Department of Mathematics, University College London, WC1E 6BT, United Kingdom}}

\maketitle

\begin{abstract}
We define a new family of multivariate stochastic processes over a finite time horizon that we call Generalised Liouville Processes (GLPs). 
GLPs are Markov processes constructed by splitting \levy random bridges into non-overlapping subprocesses via time changes.
We show that the terminal values and the increments of GLPs have generalised multivariate Liouville distributions, justifying their name. 
We provide various other properties of GLPs and some examples.
\end{abstract}
{\bf Keywords:} \levy process; \levy bridge; Markov process; multivariate Liouville distribution\\
{\bf MSC2010:} {60}

\section{Introduction} 
\levy random bridges (LRBs) -- \levy processes conditioned to have a fixed marginal law at a fixed future date -- have been applied to various problems in credit risk modelling, asset pricing and insurance (see, for example, \cite{11,9,12,10,13}).
In \cite{15}, the authors present a bivariate insurance reserving model by splitting an LRB (in this case based on the 1/2-stable subordinator) in two.
The two subprocesses are transformed to span the same time horizon, and are used to model the accumulation of insurance claims.
In a similar fashion, the present authors constructed in \cite{14} two classes of multivariate process by splitting and transforming an LRB based on the gamma process.
The first class, Archimedean survival processes, provide a natural link between stochastic processes and Archimedean copulas, and was applied to a copula interpolation problem.
The second, more general, class was the class of Liouville processes, so named because the finite dimensional distributions of a Liouville process are multivariate Liouville distributions \cite{2,1,3,4}.
This more general class was applied to the joint modelling of realised variance for two stock indices.

We extend the splitting and transformation mechanism to a general LRB to create what we call a \emph{generalised Liouville process} (GLP).
We show that the sum of coordinates of GLPs are one-dimensional LRBs, and prove that the finite dimensional distributions of GLPs are generalised multivariate Liouville distributions as defined in \cite{6}.
We show that GLPs are Markov processes and that there exists a measure change under which the law of an $n$-dimensional GLP is that of a vector of $n$ independent \levy processes.
We prove that any integrable GLP admits a canonical semimartingale representation with respect to its natural filtration. We also show that GLPs are multivariate harnesses. 
We prove that GLPs satisfy the weak Markov consistency condition, but not necessarily the strong Markov consistency condition. 
Similarly, we introduce what we call weak and strong semimartingale consistency properties, and show that GLPs have the former, but not necessarily the latter.
The class of GLPs contains as special cases: Archimedean survival processes, Liouville processes, and the bivariate process based on the 1/2-stable subordinator. 

Throughout much of this work, we focus on processes taking continuous values.
However, although details are omitted, many results are straightforward to extend to processes on a lattice.
Indeed, later we provide examples of both a continuous and a discrete GLP. 
More specifically, we consider what we call Brownian Liouville processes and Poisson Liouville processes, and present some of their special characteristics.


\section{Preliminaries}
Throughout this work, for a vector $\mathbf{x}\in\R^n$, we denote the sum of its coordinates by $\sumel \mathbf{x}=\sum_i x_i$. 
We work on a probability space $(\Omega,\mathcal{F},\mathbb{P})$ equipped with a filtration $\{\mathcal{F}_{t}\}_{t\geq0}$. 
We fix a finite time horizon $t\in[0,T]$ for some $T<\infty$ and assume $\{\mathcal{F}_{t}\}_{0\leq t \leq T}$ and all its sub-filtrations are right-continuous and complete. 
Unless stated otherwise, every stochastic process is \cadlag with a state-space that is a continuous subspace of $(\mathbb{R}^n,\mathcal{B}(\mathbb{R}^n))$ for some $n\in\mathbb{N}_+$, where $\mathcal{B}(\mathbb{R}^n)$ is the Borel $\sigma$-field.

Let $\{X_{t}\}_{t\geq 0}$ be a \levy process taking values in $\mathbb{R}$, such that the law of $X_{t}$ is absolutely continuous with respect to the Lebesgue measure for every $t\in[0,T]$.
In this case the density $f_t$ of $X_t$ exists and satisfies the Chapman-Kolmogorov convolution identity $f_{t}(x)=\int_{\mathbb{R}}f_{t-s}(x-y)f_{s}(y)\d y$, for $0<s<t\leq T$ and $x\in\R$.
Having independent and stationary increments, the finite-dimensional law of $\{X_{t}\}_{0\leq t \leq T}$ is given by
\[
\mathbb{P}(X_{t_{1}}\in \d x_{1},\ldots, X_{t_{m}}\in \d x_{m})=\prod^{m}_{i=1}f_{t_{i}-t_{i-1}}(x_{i}-x_{i-1})\d x_{i},
\]
for $m\in\mathbb{N}_{+}$, $0<t_{1}<\ldots <t_{m}<T$ and $x_{1},\ldots,x_n\in\mathbb{R}$.

A \levy bridge is a \levy process conditioned to take some fixed value at a fixed future time. 
Since \levy processes are homogenous strong Markov processes, the definition of their bridges can be formalised in terms of Doob $h$-transformations.
See \cite{5} for further details on the bridges of Markov processes.
Let $\{X^{(z)}_{t,T}\}_{0\leq t \leq T}$ be a bridge of $\{X_{t}\}_{0\leq t \leq T}$ to the value $z\in\mathbb{R}$ at time $T$, where $0<f_{T}(z)<\infty$. 
The transition density of $\{X^{(z)}_{t,T}\}_{0\leq t < T}$ is given by the following Doob $h$-transform of the transition density of $\{X_{t}\}_{0\leq t < T}$:
\begin{align}
  \mathbb{P}(X^{(z)}_{t,T}\in \d x | X^{(z)}_{s,T}=y)=\frac{h_t(x)}{h_s(y)}f_{t-s}(x-y)\d x,
  \label{eq:ratio}
\end{align}
for $0\leq s <t <T$, where $h_t(x)=f_{T-t}(z-x)$.
Note that $\{h_t\}_{0\leq t < T}$ defined as such is harmonic with respect to $\{X_{t}\}_{0\leq t < T}$.
Note also that $\Q(0<h_t(X^{(z)}_{t,T})<\infty)=1$ for all $0\leq t < T$, so the ratios of densities in (\ref{eq:ratio}) are almost surely well defined (this is discussed in the remark following Proposition 1 of \cite{5}).
Similar ratios feature throughout this work and are likewise almost surely well defined, and we may pass by them without further comment.

\emph{\levy random bridges} (LRBs) are an extension of \levy bridges.
Their interpretation in \cite{13} is as a bridge to an arbitrary random variable at time $T$, rather than a fixed value.
A process $\{L_{t}\}_{0\leq t \leq T}$ is an LRB with generating law $\nu$ if it satisfies: (i) $L_{T}$ has marginal law $\nu$, (ii) there exists a \levy process $\{X_{t}\}_{0\leq t \leq T}$ such that the density $f_t$ of $X_t$ exists for all $t\in(0,T]$, (iii) $\nu$ concentrates mass where $f_{T}$ is positive and finite $\nu$-a.s., (iv) for all $m\in\mathbb{N}_{+}$, every $0<t_{1}<\ldots<t_{m}<T$, every $(x_{1},\ldots,x_{m})\in\mathbb{R}^{m}$, and $\nu$-a.e. $z$,
\[
	\mathbb{P}(L_{t_{1}}\leq x_{1},\ldots, L_{t_{m}}\leq x_{m} | L_{T}=z)=\mathbb{P}(X_{t_{1}}\leq x_{1},\ldots, X_{t_{m}}\leq x_{m} | X_{T}=z).
\]
The finite-dimensional distribution of $\{L_{t}\}_{0\leq t \leq T}$ is given by
\begin{align} \label{eq:LRBfinitedimensionaldistributionglp}
\mathbb{P}(L_{t_{1}}\in \d x_{1},\ldots, L_{t_{m}}\in \d x_{m}, L_{T}\in \d z)=\prod^{m}_{i=1}(f_{t_{i}-t_{i-1}}(x_{i}-x_{i-1})\d x_{i})\vartheta_{t_{m}}(\dd z;x_{m}),
\end{align}
where $\vartheta_{0}(\dd z;y)=\nu(\dd z)$ and $\vartheta_{t}(\dd z;y)=\nu(\dd z)f_{T-t}(z-y)/f_{T}(z)$ for $t\in(0,T)$. 
It follows that LRBs are Markov processes with stationary increments, where the transition law of $\{L_{t}\}_{0\leq t \leq T}$ is
\begin{align} \label{eq:levytransitionprob}
&\mathbb{P}(L_{T}\in \d z | L_{s}=y)=\frac{\vartheta_{s}(\dd z;y)}{\vartheta_{s}(\mathbb{R};y)}, \notag \\
&\mathbb{P}(L_{t}\in \d x | L_{s}=y)=\frac{\vartheta_{t}(\mathbb{R};x)}{\vartheta_{s}(\mathbb{R};y)}f_{t-s}(x-y)\d x,
\end{align}
for $0\leq s < t$.
We note that the finite-dimensional distributions of LRBs with discrete state-spaces have similar transition probabilities given in terms of probability mass functions (for details see \cite{13}).
The extension of many later results to discrete processes follows from this. 
\begin{rem}
\label{harmonicinit}
Note that (\ref{eq:levytransitionprob}) is also a Doob $h$-transform of the transition density of $\{X_{t}\}_{0\leq t < T}$, and $\{\vartheta_{t}(\mathbb{R};X_t)\}_{0\leq t < T}$ is a positive $(\F_t^{X},\P)$-martingale, where $\F_t^{X}=\sigma(\{X_u\}: 0\leq u \leq t)$.
\end{rem}

Let $X_1,\ldots,X_n$ be random variables taking values in $\mathbb{R}$ with a joint density of the form
\begin{equation}
	p\left( \sum_{i=1}^n x_i \right)\prod_{i=1}^n \phi_{a_i}(x_i),
	\label{eq:liouville_density}
\end{equation}
where $a_1,\ldots,a_n>0$ are parameters, and the set of functions $\{\phi_a:a>0\}$ satisfies the convolution property $\phi_{a}*\phi_{b}=\phi_{a+b}$.
In \cite{6}, this is referred to as a ``Liouville density function''.
Indeed, according to the definition given in \cite{6}, $(X_1,\ldots,X_n)$ then has a \emph{Liouville distribution}, although we prefer to refer to this as the \emph{generalised Liouville distribution} to distinguish it from the original and special case that $\{\phi_a\}$ are gamma densities (see \cite{2,1,3,4}).
The actual definition of the generalised Liouville distribution given in \cite{6} replaces the functions $\{\phi_{a}\}$ with measures, and so it includes examples where the joint density may not exist.
For our purposes, it is convenient to relax (\ref{eq:liouville_density}) in a different way.
We keep $\{\phi_a\}$, but replace the function $p$ with a measure $\nu$:
\begin{defn}
	Let $X_1,\ldots,X_n$ be random variables taking values in $\mathbb{R}$, $\nu:\mathcal{B}(\mathbb{R})\rightarrow\mathbb{R}_+$ be a probability law, and $\mathcal{A}=\{\phi_a: 0<a\leq A < \infty)\}$ be a family of functions satisfying the convolution property: $\phi_{a}*\phi_{b}=\phi_{a+b}$, for $a+b\leq A$. Then $(X_1\ldots,X_n)$ has a \emph{generalised multivariate Liouville distribution} if its joint probability law is of the form
	\begin{align}
		\P(X_1\in \dd x_1, \ldots, X_{n-1}\in \dd x_{n-1}, &\sum_{i=1}^n X_{i}\in \dd z) \notag \\
		&= 
		\frac{\phi_{a_n}(z-\sum_{i=1}^{n-1}x_i)\nu\left(\dd z \right)}{\phi_{\sumel \mathbf{a}}(z)} \prod_{i=2}^n \phi_{a_i}(x_i) \d x_i,
		\label{eq:liouville_distribution}
	\end{align}
	for $x_1,\ldots,x_n\in \mathbb{R}$, $\phi_{a_1},\ldots,\phi_{a_n}\in\mathcal{A}$, $\mathbf{a}=(a_1,\ldots,a_n)^\tp\in\R^n_+$, $\sumel \mathbf{a}\leq A$.
\end{defn}
\begin{rem}
	Writing $B+x=\{y:y-x\in B\}$, for $B\subset\mathbb{R}$ and $x\in\mathbb{R}$, then (\ref{eq:liouville_distribution}) is equivalent to
	\begin{align}
		&\P\left(X_1\in \dd x_1, \ldots, X_{n-1}\in \dd x_{n-1}, X_{n}\in B\right) \nonumber \\
			&\qquad =\prod_{i=2}^n \left(\phi_{a_i}(x_i) \d x_i\right)\int_{z\in B+\sum_{i=1}^{n-1}x_i} \frac{\phi_{a_n}(z-\sum_{i=1}^{n-1}x_i)}{\phi_{\sumel \mathbf{a}}(z)} \nu\left(\dd z \right) \nonumber \\
			&\qquad =\prod_{i=2}^n \left(\phi_{a_i}(x_i) \d x_i\right)\int_{x_n\in B} \frac{\phi_{a_n}(x_n)}{\phi_{\sumel\mathbf{a}}(\sum_ix_i)}\nu\left(\sum_{i=1}^{n-1}x_i + \dd x_n \right). \label{eq:liouville_remark}
	\end{align}
	Furthermore, if $\nu$ admits a density $p$, then (\ref{eq:liouville_remark}) can be written in the form of a Liouville density:
	\begin{equation*}
		\P\left(X_1\in \dd x_1, \ldots, X_{n-1}\in \dd x_{n-1}, X_{n}\in \dd x_n\right)= 
			\frac{p\left(\sum_{i}x_i\right)}{\phi_{\sumel \mathbf{a}}(\sum_ix_i)}\prod_{i=1}^n \left(\phi_{a_i}(x_i) \d x_i\right).
	\end{equation*}
\end{rem}

\section{Generalised Liouville processes}
To construct a GLP, we start with a ``master'' LRB $\{L_t\}_{0\leq t \leq u_n}$ for $u_n\in\R_+$ and $n\geq2$, where $L_{u_n}$ has marginal law $\nu$. 
We assume that $\nu$ has no continuous singular part and split $\{L_t\}_{0\leq t \leq u_n}$ into $n$ non-overlapping subprocesses.

\begin{defn} \label{def:GLP}
For $m_1,\ldots, m_n >0$ ($n\geq 2$), define the strictly increasing sequence $\{u_{i}\}^{n}_{i=1}$ by $u_{0}=0$ and $u_{i}=u_{i-1}+m_{i}$ for $i=1,\ldots,n$.
Then a process $\{\xib_{t}\}_{0\leq t \leq 1}$ is an \emph{$n$-dimensional generalised Liouville Process (GLP)} if
\begin{align*}
\{\xib_t\}_{0\leq t \leq 1} \law \left\{\left(L_{tm_1}-L_{0},\ldots,L_{tm_i+u_{i-1}}-L_{u_{i-1}},\ldots,L_{tm_n+u_{n-1}}-L_{u_{n-1}}\right)^\tp\right\}_{0\leq t \leq 1},
\end{align*}
for some LRB $\{L_t\}_{0\leq t \leq u_n}$ with generating law $\nu$.
We say that the \emph{generating law} of $\{\xib_{t}\}_{0\leq t \leq 1}$ is $\nu$ and the \emph{activity parameter} of $\{\xib_{t}\}_{0\leq t \leq 1}$ is $\mb=(m_1,\ldots,m_n)^\tp$.
\end{defn}
We have restricted the definition of GLPs to the time horizon $[0,1]$ for convenience.
It is straightforward to generalise to an arbitrary closed time horizon.
Each coordinate $\{\xi_t^{(i)}\}_{0\leq t \leq 1}$ of $\{\xib_t\}_{0\leq t \leq 1}$ is a subprocess of an LRB. 
Since subprocesses of LRBs are themselves LRBs (see \cite{13}), GLPs form a multivariate generalisation of LRBs. 
For the rest of the paper, we let $\{\xib_t\}_{0\leq t \leq 1}$ be an $n$-dimensional GLP with generating law $\nu$, and $\{L_t\}_{0\leq t \leq u_n}$ be the master process of $\{\xib_t\}_{0\leq t \leq 1}$. 
In addition, we denote the filtration generated by $\{\xib_t\}_{0\leq t \leq 1}$ by $\{\F_t^{\xib}\}_{0\leq t \leq 1} \subset \{\F_t\}_{0\leq t \leq 1}$. Explicitly, we have $\F_t^{\xib}=\sigma(\{\xib_u\}: 0\leq u \leq t)$. 
\begin{rem}
	\label{remasp2}
	The bivariate model of insurance claims based on the 1/2-stable subordinator proposed in \cite{15} is a GLP.
\end{rem}
\begin{rem}
	\label{remasp}
	Liouville processes and Archimedean survival processes, as introduced in \cite{14}, form a subclass of GLPs.
	In Definition \ref{def:GLP}, if the LRB $\{L_t\}_{0\leq t \leq u_n}$ is a gamma random bridge with unit activity parameter, then we have a Liouville process. If we further fix $m_i = 1$ for $i=1,\ldots, n$, then we have an Archimedean survival processes.
\end{rem}

\begin{prop} \label{eq:finitedimensionalGLPincrementcharacterise}
The following hold for any GLP $\{\xib_t\}_{0\leq t \leq 1}$:
\begin{enumerate}
\item The increments of $\{\xib_t\}_{0\leq t \leq 1}$ have a generalised multivariate Liouville distribution.
\item The terminal value $\xib_1$ has a generalised multivariate Liouville distribution.
\end{enumerate}
\end{prop}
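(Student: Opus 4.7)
Both parts reduce to writing the joint density of the master LRB $\{L_t\}_{0\leq t\leq u_n}$ at a suitable set of times via~(\ref{eq:LRBfinitedimensionaldistributionglp}), performing a linear change of variables so that the target increments become coordinates, and matching the result with~(\ref{eq:liouville_distribution}) using $\phi_a=f_a$ (for which the convolution property is automatic) and $A=u_n$. Part~(2) is the direct case: at $t=1$ the coordinates $\xi^{(i)}_1\law L_{u_i}-L_{u_{i-1}}$ sum to $L_{u_n}\sim\nu$, so applying~(\ref{eq:LRBfinitedimensionaldistributionglp}) at the times $u_1,\ldots,u_{n-1}$ with terminal time $u_n$ and passing to the unit-Jacobian variables $x_i=L_{u_i}-L_{u_{i-1}}$, $z=L_{u_n}$ yields~(\ref{eq:liouville_distribution}) directly, with $a_i=m_i$ and generating measure $\nu$.

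For part~(1), fix $0\leq s<t\leq 1$ and set $a_i=sm_i+u_{i-1}$, $b_i=tm_i+u_{i-1}$, so that $\xi^{(i)}_t-\xi^{(i)}_s\law L_{b_i}-L_{a_i}$ has length $d_i=(t-s)m_i$. The $2n$ time points $a_1<b_1<\cdots<a_n<b_n$ partition $[0,u_n]$ into these ``active'' intervals interlaced with ``gap'' intervals of total length $(1-(t-s))u_n$. Write the joint density of $(L_{a_1},L_{b_1},\ldots,L_{a_n},L_{b_n},L_{u_n})$ from~(\ref{eq:LRBfinitedimensionaldistributionglp}), switch to the unit-Jacobian variables $y_i=L_{a_i}$, $x_i=L_{b_i}-L_{a_i}$, $w=L_{u_n}$, and integrate out $y_1,\ldots,y_n$ sequentially. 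Iterated application of $f_a\ast f_b=f_{a+b}$ collapses the $n+1$ gap factors into a single factor $f_{(1-(t-s))u_n}(w-\sum_ix_i)$, giving the joint density
\[
\prod_{i=1}^n f_{d_i}(x_i)\,\dd x_i\cdot\frac{f_{(1-(t-s))u_n}(w-\sum_ix_i)\,\nu(\dd w)}{f_{u_n}(w)}
\]
of $(x_1,\ldots,x_n,w)$. Substituting $z=\sum_ix_i$ and integrating out $w$, the marginal factor in $z$ is recognised as the one-dimensional marginal law of $L_{(t-s)u_n}$ (obtained by integrating an LRB density at a single time against $\nu$), so the joint law of $(x_1,\ldots,x_{n-1},z)$ takes the form~(\ref{eq:liouville_distribution}) with $a_i=(t-s)m_i$ and generating measure $\tilde\nu(\dd z):=\P(L_{(t-s)u_n}\in\dd z)$.

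The only non-routine step is the gap-integration bookkeeping in part~(1); it reduces entirely to iterated convolution of Lévy densities. Part~(2) is essentially just a change of variables.
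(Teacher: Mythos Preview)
Your approach is correct, but you and the paper read ``the increments'' differently, and this leads to genuinely different arguments.

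The paper interprets Part~(1) as a statement about the \emph{finite-dimensional} increment structure of the process: for any collection of partitions $0=t^i_0<\cdots<t^i_{k_i}=1$ (one per coordinate), the stacked vector $\boldsymbol{\Delta}=(\Delta_{ij})$ with $\Delta_{ij}=\xi^{(i)}_{t^i_j}-\xi^{(i)}_{t^i_{j-1}}$ has a generalised Liouville distribution. Because these partitions run all the way to $1$, the entries of $\boldsymbol{\Delta}$ are non-overlapping increments of the master LRB that \emph{cover} $[0,u_n]$ and sum to $L_{u_n}$. There are therefore no gaps to integrate out: a single unit-Jacobian change of variables in~(\ref{eq:LRBfinitedimensionaldistributionglp}) gives the Liouville form directly, always with generating measure $\nu$. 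Part~(2) is then the special case $k_i=1$.

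You instead read Part~(1) as a statement about a single increment $\xib_t-\xib_s$. This is a legitimate reading (and is the one echoed later in Remark~\ref{condincrementdistgenlaw}), but it forces you to handle the gap intervals $[b_i,a_{i+1}]$ and to identify a new generating measure $\tilde\nu(\dd z)=\P(L_{(t-s)u_n}\in\dd z)$. Your convolution bookkeeping is correct and the identification of $\tilde\nu$ is right; one small point to tidy is the boundary case $t=1$, where $b_n=u_n$ and $L_{b_n}$ need not have a density, so the ``$2n$ interior points plus terminal'' template has to be adjusted.

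What each approach buys: the paper's version is shorter, always lands on the fixed generating measure $\nu$, and simultaneously characterises the full law of the GLP via Kolmogorov extension---but it only produces Liouville vectors whose coordinates sum to $R_1$. Your version isolates a genuine $n$-vector increment $\xib_t-\xib_s$ and exhibits its generating law explicitly, which is more informative about a single time-step and anticipates Remark~\ref{condincrementdistgenlaw}.
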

\begin{proof}
See Appendix.
\end{proof}
In what follows, we define a family of unnormalised measures $\{\theta_t\}_{0\leq t < 1}$, such that
\begin{align}
	\label{unormmeasures}
	\theta_0(B;x)&=\nu(B), \notag
	\\	\theta_t(B;x)&=\int_B \frac{f_{\sumel \mathbf{m}(1-t)}(z-x)}{f_{\sumel \mathbf{m}}(z)} \, \nu(\dd z),
\end{align}
for $t\in[0,1)$, $x\in\R$ and $B\in\Borel$.
We also write $\Theta_t(x)=\theta_t(\mathbb{R};x)$.
We define $R_t$ to be the sum of coordinates of $\xib_t$:
\begin{equation*}
	R_t=\sum_{i=1}^n \xi^{(i)}_t = \sumel \xib_t.
\end{equation*}
\begin{prop}
\label{markovtransitionprob}
The GLP $\{\xib_t\}_{0\leq t \leq 1}$ is a Markov process with the transition law given by
\begin{multline*}
\Q\left(\left. \xi_1^{(1)}\in\dd z_1,\ldots, \xi_1^{(n-1)}\in\dd z_{n-1},\xi_1^{(n)}\in B  \,\right| \xib_s=\mathbf{x} \right)=
		\\ \frac{\theta_{\t(s)}(B+\sum_{i=1}^{n-1}z_i;x_n+\sum_{i=1}^{n-1}z_i)}{\Theta_s(\sumel \xb)}
									\prod_{i=1}^{n-1}f_{(1-s)m_i}(z_i-x_i)\d z_i,
\end{multline*}
and
\begin{equation}
\label{eq:GLP2}
\Q\left( \xib_t\in \d\mathbf{y}  \,|\, \xib_s=\mathbf{x} \right)=
					\frac{\Theta_{t}(\sumel \yb)}{\Theta_s(\sumel \xb)}
					\prod_{i=1}^{n}f_{(t-s)m_i}(y_i-x_i) \d y_i, 
\end{equation}
where $\mathbf{x}, \mathbf{y}\in\R^n$, $\t(t)=1-m_n(1-t)/(\sumel \mathbf{m})$, $0\leq s<t<1$, and $B\in\Borel$.
\end{prop}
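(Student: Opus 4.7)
The plan is to derive both transition laws and the Markov property from a single calculation of the joint density of $(\xib_{s_1},\ldots,\xib_{s_k})$ on an arbitrary finite grid, based on the LRB finite-dimensional distribution \eqref{eq:LRBfinitedimensionaldistributionglp} applied to the master LRB $\{L_r\}_{0\leq r\leq u_n}$.

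Fix $0\leq s<t<1$ first. The pair $(\xib_s,\xib_t)$ is a measurable function of $L$ at the $3n-1$ ordered times $u_{i-1},\,u_{i-1}+sm_i,\,u_{i-1}+tm_i$ for $i=1,\ldots,n$ (with $L_{u_0}=0$). I will write the joint density of $L$ at these times using \eqref{eq:LRBfinitedimensionaldistributionglp}, change variables from the $L$-values to $(\xb,\yb)$ while retaining the boundary quantities $\alpha_i:=L_{u_{i-1}}$ ($i\geq 2$) and the terminal $\zeta:=L_{u_n}$ as auxiliary coordinates (the Jacobian is $1$ because the substitutions $\xb,\yb \mapsto L$ are triangular), and then integrate out the auxiliaries. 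Each $\alpha_i$ enters only through a factor $f_{(1-t)m_i}(\alpha_{i+1}-\alpha_i-y_i)$, so successive integration over $\alpha_2,\ldots,\alpha_n$ telescopes via the \levy convolution identity $f_a*f_b=f_{a+b}$ to the single factor $f_{(1-t)\sumel\mathbf{m}}(\zeta-\sumel\yb)$; integrating this against $\nu(\dd\zeta)/f_{\sumel\mathbf{m}}(\zeta)$ is exactly $\Theta_t(\sumel\yb)$ by \eqref{unormmeasures}. The same calculation with $\xib_t$ omitted yields $\P(\xib_s\in\dd\xb)=\Theta_s(\sumel\xb)\prod_i f_{sm_i}(x_i)\dd x_i$, and the ratio is precisely \eqref{eq:GLP2}.

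The Markov property then follows immediately: running the same argument at any grid $0<s_1<\cdots<s_k<t<1$ only adds further time points $u_{i-1}+s_jm_i$, contributing additional \levy factors $f_{(s_j-s_{j-1})m_i}(x_i^{(j)}-x_i^{(j-1)})$, while the $\alpha_i$ and $\zeta$ integrations still collapse to $\Theta_t(\sumel\yb)$ exactly as before. The joint density therefore factorises into a chain of kernels of the form \eqref{eq:GLP2}, so the conditional law of $\xib_t$ given $(\xib_{s_1},\ldots,\xib_{s_k})$ depends on the past only through $\xib_{s_k}$, i.e.\ only through $\xib_s$.

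Finally, for the terminal conditional law of $\xib_1$, I re-run the computation with $t=1$, except that now the differences $z_i=\alpha_{i+1}-\alpha_i$ ($i<n$) are data rather than integration variables, and only $\zeta$ is integrated, over the set $B+\sum_{i<n}z_i$. By the choice of $\tau(s)$ so that $(1-\tau(s))\sumel\mathbf{m}=(1-s)m_n$, this residual integral equals $\theta_{\tau(s)}(B+\sum_{i<n}z_i;\,x_n+\sum_{i<n}z_i)$, and division by the marginal $\Theta_s(\sumel\xb)\prod_i f_{sm_i}(x_i)$ delivers the first displayed formula. The only real obstacle is combinatorial bookkeeping of the time grid and of the change of variables; once $(\xb,\yb)$ have been isolated from the auxiliary boundary values, the \levy convolution identity does all the work.
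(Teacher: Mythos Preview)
Your argument is correct. The telescoping of the $\alpha_i$-integrals via $f_a*f_b=f_{a+b}$ works exactly as you describe (each $\alpha_i$ actually sits in \emph{two} adjacent $(1{-}t)$-factors, but that is precisely what makes the successive convolutions collapse), and the same structure carries the Markov property and the terminal formula.

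Your route, however, is genuinely different from the paper's. The paper never introduces the boundary values $\alpha_i=L_{u_{i-1}}$ or performs any explicit integration over them. Instead it writes
\[
\P(\xib_{t_1}\in\dd\xb_1,\ldots,\xib_{t_k}\in\dd\xb_k)
=\int_\R \P(\cdot\,|\,R_1=z)\,\nu(\dd z),
\]
observes that conditional on $R_1=z$ the master process is a \levy bridge, and then invokes the \emph{cyclical exchangeability} of \levy bridge increments (citing \cite{18}) to treat the $nk$ scattered increments $L_{u_{i-1}+t_j m_i}-L_{u_{i-1}+t_{j-1}m_i}$ as if they were consecutive, obtaining the product $\prod_{i,j}f_{m_i(t_j-t_{j-1})}(\cdot)\,f_{\sumel\mb(1-t_k)}(z-\sumel\xb_k)$ in one stroke. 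You instead preserve the true consecutive-increment structure of $\{L_r\}$ by carrying the $\alpha_i$'s along, and then remove them by repeated convolution. The paper's approach is shorter and more conceptual but imports an external exchangeability result; yours is self-contained and makes the mechanism---that the ``gap'' increments $L_{u_i}-L_{u_{i-1}+tm_i}$ combine into a single $f_{(1-t)\sumel\mb}$ factor---completely explicit, at the price of a little more bookkeeping.
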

\begin{proof}
See Appendix.
\end{proof}

\begin{rem}
	From Proposition \ref{markovtransitionprob}, if the generating law $\nu$ admits a density $p$, we get a neater transition law to the terminal value, given by
	\begin{equation*}
	\label{nuwithdensity}
	\Q\left( \xib_1 \in \d\mathbf{z} \,|\, \xib_s=\mathbf{x} \right)=
	\frac{p(\sumel \mathbf{z})}{\Theta_s(\sumel \mathbf{x}) f_{\sumel \mathbf{m}}(\sumel \mathbf{z})}
	\prod_{i=1}^n f_{(1-s)m_i}(z_i-x_i) \d z_i.
	\end{equation*}
\end{rem}

\begin{rem}
\label{harmonic}
Our definition of GLPs is somewhat heuristic.
A formal definition is possible through a Doob $h$-transform, since $\{\Theta_{t}\}_{0\leq t < 1}$ is harmonic to a \levy process $\{\mathbf{X}_t\}_{t\geq 0}$ taking values in $\R^n$ with marginal density $g_t(\xb) = \prod_{i=1}^{n}f_{m_i t}(x_i)$.
To see this, note that we can alternatively write (\ref{eq:GLP2}) as
\[
\Q\left( \xib_t\in \d\mathbf{y}  \,|\, \xib_s=\mathbf{x} \right)=
					\frac{\tilde{\Theta}_{t}(\yb)}{\tilde{\Theta}_s(\xb)}
					g_{t-s}(\yb-\xb) \d \yb,
\]
where $\tilde{\Theta}_{t}(\xb)=\Theta_t(\sumel \xb)$, for $0\leq t < 1$.
To see that $\{\tilde{\Theta}_{t}\}_{0\leq t < 1}$ is harmonic to $\{\mathbf{X}\}_{0\leq t <1}$, note that
\begin{align}
\int_{\R^n}g_{t-s}(\yb-\xb)\tilde{\Theta}_{t}(\yb) \d\mathbf{y}
&=\int_{\R^n}\prod_{i=1}^{n}f_{(t-s)m_i}(y_i-x_i)\tilde{\Theta}_{t}(\yb) \d\mathbf{y} \notag \\ 
&=\int_{\R}\int_{\R^n}f_{\sumel \mb (1-t)}(z-\sumel\yb)\prod_{i=1}^{n}f_{(t-s)m_i}(y_i-x_i) \d\mathbf{y} \frac{\d\nu(z)}{f_{\sumel \mb}(z)} \notag \\ 
&=\int_{\R}f_{\sumel \mb (1-s)}(z-\sumel\xb) \frac{\d\nu(z)}{f_{\sumel\mb}(z)} \label{eq:multi_conv} \\ 
&=\tilde{\Theta}_{s}(\xb) \notag
\end{align}
for $0\leq s < t < 1$, where (\ref{eq:multi_conv}) follows from repeated use of the convolution property of $\{f_t\}_{0\leq t\leq \sumel \mb}$.
\end{rem}

Remark \ref{harmonic} demonstrates that the laws of $\{\xib_t\}_{0\leq t <1}$ and $\{\mathbf{X}_t\}_{0\leq t <1}$ are equivalent, which we formalise by the corollary below.
\begin{coro}
\label{radonnikodymglp}
	Suppose that $\{\xib_{t}\}_{t\geq0}$ is a \levy process under measure $\widetilde{\Q}$ with $\widetilde{\Q}(\xib_{t}\in \d \mathbf{x})=g_t(\xb)\d\xb$.
	Then $\{\Theta_t(R_t)^{-1}\}_{0\leq t <1}$ is a Radon-Nikodym density process that defines the measure change
	\begin{equation}
		\label{eq:RNDPlevy}
		\left.\frac{\dd \widetilde{\Q}}{\dd \Q}\right|_{\F_t^{\xib}}=\Theta_t(R_t)^{-1}, \qquad (0\leq t <1),
	\end{equation} 
	and $\{\xib_t\}_{0\leq t < 1}$ is a $\Q$-GLP with generating law $\nu$ and activity parameter $\mb$.
\end{coro}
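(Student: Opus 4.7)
The plan is to verify the two claims in order: first, that $M_t := \Theta_t(R_t)^{-1}$ indeed defines a legitimate measure change from $\Q$ to $\widetilde{\Q}$ on each $\F_t^{\xib}$; second, that the resulting measure $\Q$ makes $\{\xib_t\}_{0\leq t < 1}$ into a GLP with the stated generating law and activity parameter. The natural way to check the first claim is to show that $\Theta_t(R_t)$ is a strictly positive $(\F_t^{\xib},\widetilde{\Q})$-martingale starting at $1$, so that its reciprocal defines a consistent family of Radon-Nikodym derivatives.

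First I would note $\Theta_0(0) = \nu(\R) = 1$ since $\nu$ is a probability law, and $R_0 = \sumel \xib_0 = 0$, so $\Theta_0(R_0) = 1$. Strict positivity of $\Theta_t(R_t)$ for $t<1$ follows from the analogous remark made for LRBs (see Remark \ref{harmonicinit}, which applies here coordinatewise via the representation $\tilde{\Theta}_t(\xb) = \Theta_t(\sumel \xb)$). The $\widetilde{\Q}$-martingale property is exactly the content of the harmonicity computation in Remark \ref{harmonic}: since under $\widetilde{\Q}$ the process $\{\xib_t\}$ has marginal density $g_t(\xb) = \prod_i f_{m_i t}(x_i)$ and independent stationary increments, the identity derived in \eqref{eq:multi_conv} gives $\widetilde{\E}[\tilde{\Theta}_t(\xib_t)\,|\,\F_s^{\xib}] = \tilde{\Theta}_s(\xib_s)$ for $0\leq s<t<1$. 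Hence $M_t^{-1} = \Theta_t(R_t)$ is a strictly positive unit-mean martingale, and \eqref{eq:RNDPlevy} defines a consistent measure $\Q$ on each $\F_t^{\xib}$.

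For the second claim, I would compute the $\Q$-transition law directly via the abstract Bayes formula. For any $0\leq s < t < 1$ and any bounded measurable $\varphi$,
\[
\E^\Q[\varphi(\xib_t)\,|\,\F_s^{\xib}] = \frac{\widetilde{\E}[\Theta_t(R_t)\,\varphi(\xib_t)\,|\,\F_s^{\xib}]}{\Theta_s(R_s)} = \int_{\R^n} \varphi(\yb)\,\frac{\Theta_t(\sumel \yb)}{\Theta_s(\sumel \xb)}\prod_{i=1}^n f_{(t-s)m_i}(y_i - x_i)\,\d y_i,
\]
where I have used the $\widetilde{\Q}$-Lévy property to evaluate the conditional expectation. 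This is precisely the transition law \eqref{eq:GLP2} established for GLPs in Proposition \ref{markovtransitionprob}. Since $\xib_0 = 0$ under both measures and the Markov transition law on $[0,1)$ uniquely determines the finite-dimensional distributions, $\{\xib_t\}_{0\leq t < 1}$ under $\Q$ has the same law as an $n$-dimensional GLP with generating law $\nu$ and activity parameter $\mb$.

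The only delicate point, and therefore the main thing to be careful about, is the martingale/positivity verification on $[0,1)$: one must ensure $\Theta_t(R_t) \in (0,\infty)$ $\widetilde{\Q}$-almost surely so that the $h$-transform is genuinely well-defined and the conditional Bayes computation is valid. This is handled by the same harmonic-function almost-sure finiteness argument invoked after \eqref{eq:ratio} for the LRB case, applied here to the multivariate harmonic function $\tilde{\Theta}_t$ of Remark \ref{harmonic}. Once that is in place, the rest of the proof is a direct identification of transition kernels.
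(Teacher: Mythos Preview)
Your proposal is correct and follows essentially the same route as the paper: verify that $\Theta_t(R_t)$ is a strictly positive unit-mean $(\F_t^{\xib},\widetilde{\Q})$-martingale (the paper writes the convolution computation out explicitly rather than citing Remark~\ref{harmonic}, and it also spells out the integrability $\E^{\widetilde{\Q}}|\Theta_t(R_t)|=1$ via Fubini, but this is the same calculation), and then apply the Bayes formula to recover the GLP transition law~\eqref{eq:GLP2}.
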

\begin{proof}
See Appendix.
\end{proof}

\begin{rem}
\label{condincrementdistgenlaw}
Let $\nu_{st}(B)=\Q(R_t\in B\,|\, \F_s^{\xib})$ for $0\leq s < t \leq 1$ and $B\in\Borel$.
Given $\xib_s$, the increment $\xib_t - \xib_s$ has a generalised multivariate Liouville distribution with generating law $\nu_{st}(B+R_s)$ for $B\in\Borel$ and parameter vector $\mathbf{m}(t-s)$.
\end{rem}

\begin{prop}
Given $\xib_1$, the process $\{\xib_t\}_{0\leq t \leq 1}$ is a vector of independent \levy bridges.
\end{prop}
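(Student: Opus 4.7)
The plan is to work from the representation $\xi^{(i)}_t = L_{t m_i + u_{i-1}} - L_{u_{i-1}}$ in Definition \ref{def:GLP} and exploit the Markov property of the master LRB $\{L_t\}_{0\le t \le u_n}$. The first observation is that conditioning on $\xib_1$ is equivalent to conditioning on the skeleton $(L_{u_0},L_{u_1},\ldots,L_{u_n})$: since $L_{u_0}=0$ and $\xi^{(i)}_1 = L_{u_i} - L_{u_{i-1}}$, the two random vectors are in bijective correspondence and so generate the same conditional law.

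Next, because LRBs are Markov processes, given the skeleton the restrictions of $L$ to the disjoint intervals $[u_{i-1},u_i]$ for $i=1,\ldots,n$ are mutually conditionally independent (the standard Markov decomposition along a finite family of time points). Translating each segment by $L_{u_{i-1}}$ and rescaling time by the factor $m_i$ then transfers this to the conditional independence of $\xi^{(1)},\ldots,\xi^{(n)}$ given $\xib_1$.

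It remains to identify the conditional law of each segment as a \levy bridge. The natural route is to insert an arbitrary refinement $\{t_j\}$ of the skeleton into the finite-dimensional distribution \eqref{eq:LRBfinitedimensionaldistributionglp} of $L$, and to divide by the joint density of the skeleton itself (which is read off from the same formula by taking the $t_j$'s to coincide with the $u_i$'s). The terminal $\vartheta$-factor then cancels against the $\nu$-weight carried by $L_{u_n}$, and what remains inside each interval $[u_{i-1},u_i]$ is a product of the increment densities $f_{t_j-t_{j-1}}$ multiplied by the ratio $f_{u_i - t_{\mathrm{last}}}(L_{u_i} - x_{\mathrm{last}})/f_{m_i}(L_{u_i} - L_{u_{i-1}})$, i.e.\ precisely the \levy bridge kernel \eqref{eq:ratio} of the underlying \levy process $X$ from $L_{u_{i-1}}$ to $L_{u_i}$. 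After translation and the bijective time-change $s\mapsto s/m_i$, each $\xi^{(i)}$ given $\xi^{(i)}_1$ is therefore a bridge on $[0,1]$ of the \levy process $\{X_{m_i t}\}_{t\ge 0}$ terminating at $\xi^{(i)}_1$.

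The main (minor) obstacle is simply verifying this cancellation cleanly, so that no generating-law dependence survives inside any single interval and each segment is recognised as a genuine \levy bridge rather than another, degenerate LRB. Once that algebra is carried out, combining it with the conditional independence from the Markov decomposition in the second step yields the claim.
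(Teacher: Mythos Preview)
Your proposal is correct, and it takes a genuinely different route from the paper's own proof. The paper does not go back to the master LRB at all: it invokes the GLP transition law already established in Proposition~\ref{markovtransitionprob} (equation~\eqref{eq:GLP2}), conditions on $\xib_1=\mathbf{z}$, and shows that the resulting transition density factorises as
\[
\Q(\xib_t\in\dd\yb \,|\, \xib_1=\mathbf{z},\F_s^{\xib})
= \prod_{i=1}^n \frac{f_{m_i(t-s)}(y_i-\xi^{(i)}_s)\,f_{m_i(1-t)}(z_i-y_i)}{f_{m_i(1-s)}(z_i-\xi^{(i)}_s)}\,\dd y_i,
\]
which is visibly a product of \levy bridge kernels. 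By contrast, you work bottom-up from Definition~\ref{def:GLP}: you identify $\sigma(\xib_1)$ with $\sigma(L_{u_0},\ldots,L_{u_n})$, use the Markov property of the master LRB to split it along the skeleton into conditionally independent pieces, and then read off the \levy bridge law on each subinterval directly from the finite-dimensional formula~\eqref{eq:LRBfinitedimensionaldistributionglp} by the cancellation you describe. Your argument is more elementary in that it does not rely on Proposition~\ref{markovtransitionprob} (whose proof in the Appendix is the real work), and it makes the role of the Markov decomposition of $\{L_t\}$ explicit. The paper's argument is shorter once that proposition is in hand, and it delivers the factorised transition density in one line without handling the last interval $[u_{n-1},u_n]$ separately. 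Both approaches are sound; yours would also serve as an independent check that the transition law in Proposition~\ref{markovtransitionprob} is consistent with the defining construction.
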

\begin{proof}
For all $s\in[0,1)$ the transition probabilities to $\xib_t$ ($s<t<1$) can be computed from (\ref{eq:GLP2}) by first substituting $\nu$ with the Dirac measure $\delta_{\sumel \mathbf{z}}$ in (\ref{unormmeasures}), yielding
\begin{equation*}
\Q(\xib_t \in \dd \mathbf{y} \,|\, \xib_1=\mathbf{z}, \F_s^{\xib}) = \prod_{i=1}^n 	\frac{f_{m_i(t-s)}(y_i-\xi^{(i)}_s)f_{m_i(1-t)}(z_i-y_i)}{f_{m_i(1-s)}(z_i-\xi^{(i)}_s)} \d y_i,
\end{equation*}
for almost every $\mathbf{z}\in\R^n$.
Conditional on $\xib_1=\mathbf{z}$, we see that the transition laws of the coordinates of $\{\xib_t\}_{0\leq t \leq 1}$ are independent, and that each is the transition law of a \levy bridge.
\end{proof}
Using the Markov property of $\{\xib_t\}_{0\leq t \leq 1}$, we can also provide the conditional laws of the coordinates $\xi^{(i)}_t$ given $\F_s^{\xi^{(i)}}=\sigma(\{\xi^{(i)}_u\}: 0\leq u \leq s)$ or given $\F_s^{\xib}$, for $s<t$.
\begin{prop}
\label{prop:marginaltransition}
The coordinates of $\{\xib_t\}_{0\leq t < 1}$ have the following transition laws:
\begin{enumerate}
\item The marginally conditioned case:
\begin{align*}
\Q(\xi^{(i)}_t \in \d y_i | \xi^{(i)}_s=x_i) = \frac{\Psi_{t}^{(i)}(y_i)}{\Psi_{s}^{(i)}(x_i)}f_{(t-s)m_i}(y_i-x_i)\d y_i,
\end{align*}
where
\begin{align*}
\Psi_{t}^{(i)}(x)=\int_{\R} \frac{f_{\sumel \mathbf{m} -tm_i}(r-x)}{f_{\sumel \mathbf{m}}(r)}\nu(\dd r). 
\end{align*}
\item The fully conditioned case:
\begin{align*}
\Q\left(\left. \xi^{(i)}_t \in \dd y_i \,\right| \xib_s = \mathbf{x} \right) = \frac{\Theta_t^{(i)}(\mathbf{x},y_i)}{\Theta_s(\sumel \xb)}f_{(t-s)m_i}(y_i-x_i) \d y_i,
\end{align*} 
where
\begin{align*}
\Theta_t^{(i)}(\xb,y)= \int_{\R} \frac{f_{\sumel \mathbf{m}(1-s) + (t-s)m_i}(r-\sumel \xb + (y-x_i))}{f_{\sumel \mathbf{m}}(r)} \, \nu(\dd r),
\end{align*}
\end{enumerate}
for $0 \leq s < t < 1$. 
\end{prop}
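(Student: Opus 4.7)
My plan for part~1 is to observe that each coordinate $\{\xi^{(i)}_t\}_{0\le t\le 1}$ is itself a \levy random bridge. By Definition~\ref{def:GLP} its law equals that of $\{L_{u_{i-1}+tm_i}-L_{u_{i-1}}\}_{0\le t\le 1}$, a subprocess of the master LRB, which by~\cite{13} is an LRB on $[0,1]$ with underlying \levy marginal $f_{tm_i}$ at time $t$ and generating law $\nu^{(i)}:=\mathcal{L}(L_{u_i}-L_{u_{i-1}})$. Applying the LRB transition law~(\ref{eq:levytransitionprob}) directly, the problem reduces to identifying the associated normaliser with $\Psi^{(i)}_t$. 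I would compute $\nu^{(i)}$ from the joint distribution of $(L_{u_{i-1}},L_{u_i},L_{u_n})$ given by~(\ref{eq:LRBfinitedimensionaldistributionglp}), integrate out $L_{u_{i-1}}$ using the convolution identity $f_{u_{i-1}}\ast f_{\sumel\mb-u_i}=f_{\sumel\mb-m_i}$, substitute the resulting density into the LRB normaliser formula, and apply the convolution property once more to collapse to $\Psi^{(i)}_t(y)$.

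For part~2, I would marginalise the full transition kernel from Proposition~\ref{markovtransitionprob} over the coordinates $j\ne i$:
\[
\Q\left(\xi^{(i)}_t\in\dd y_i\mid \xib_s=\xb\right)
  =\frac{f_{(t-s)m_i}(y_i-x_i)\,\dd y_i}{\Theta_s(\sumel\xb)}\int_{\R^{n-1}}\Theta_t(\sumel\yb)\prod_{j\ne i}f_{(t-s)m_j}(y_j-x_j)\,\dd y_j.
\]
Expanding $\Theta_t$ via~(\ref{unormmeasures}), applying Fubini to bring the $\nu(\dd r)$-integral outside, and changing variables $z_j=y_j-x_j$ for $j\ne i$, the integrand becomes an $(n-1)$-fold \levy convolution of $f_{\sumel\mb(1-t)}$ against $\{f_{(t-s)m_j}\}_{j\ne i}$. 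Successively applying $f_a\ast f_b=f_{a+b}$ collapses this to a single \levy density whose time index accumulates to $\sumel\mb(1-t)+(t-s)(\sumel\mb-m_i)$ and whose argument is $r$ shifted by a constant depending on $\xb$ and $y_i$, reproducing the form of $\Theta^{(i)}_t(\xb,y_i)$.

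The conceptual content of both parts reduces to two ingredients: the identification of each coordinate as an LRB (for part~1) and the convolution property of $\{f_t\}$ inherited from the master \levy process. The main obstacle is therefore just bookkeeping: tracking the \levy time exponents and argument shifts through the iterated convolutions, and checking that the resulting parameters agree with those stated in $\Psi^{(i)}_t$ and $\Theta^{(i)}_t$.
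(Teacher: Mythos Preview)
Your proposal is correct and would go through; the bookkeeping does indeed collapse as you anticipate. However, your route differs from the paper's in both parts.

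For part~1, the paper does not invoke the LRB structure of $\{\xi^{(i)}_t\}$ at all. Instead it writes
\[
\Q(\xi^{(i)}_t\in\dd y_i\mid \xi^{(i)}_s=x_i)
=\frac{\int_\R \Q(\xi^{(i)}_t\in\dd y_i,\xi^{(i)}_s\in\dd x_i\mid R_1=r)\,\nu(\dd r)}
       {\int_\R \Q(\xi^{(i)}_s\in\dd x_i\mid R_1=r)\,\nu(\dd r)},
\]
computes numerator and denominator directly from the \levy bridge law of the master process given $R_1=r$, and cancels. Your argument instead leverages the already-established result that subprocesses of LRBs are LRBs and the transition formula~(\ref{eq:levytransitionprob}), reducing the work to identifying $\nu^{(i)}$ and a single convolution; this is more structural and reuses more of the machinery, at the cost of an intermediate object ($\nu^{(i)}$) that the paper never introduces.

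For part~2, the paper again argues from first principles via Bayes, computing the joint density of $(\xi^{(i)}_t,\xib_s)$ by conditioning on $R_1$ and dividing by the density of $\xib_s$ from~(\ref{eq:denomlevy}). Your marginalisation of the full kernel~(\ref{eq:GLP2}) is more economical: Proposition~\ref{markovtransitionprob} has already done the Bayes step, so you only need Fubini and the $(n-1)$-fold convolution. The paper's route is self-contained, while yours makes the dependence on Proposition~\ref{markovtransitionprob} explicit. Both arrive at the same integrand; note that the time index you will obtain from the convolution is $\sumel\mb(1-t)+(t-s)(\sumel\mb-m_i)=\sumel\mb(1-s)-(t-s)m_i$ and the argument shift is $r-\sumel\xb-(y_i-x_i)$, which matches what the paper's own computation yields (the printed signs in the definition of $\Theta^{(i)}_t$ appear to be typos).
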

\begin{proof}
See Appendix.
\end{proof}

\begin{prop} 
\label{prop:R_GLP}
The process $\{R_t\}_{0\leq t \leq 1}$ is an LRB with generating law $\nu$ and the transition law
\begin{align}
\Q(R_1\in \dd r\,|\, \xib_s=\mathbf{x}) &= \frac{\theta_{s}(\dd r;\sumel \xb)}{\Theta_{s}(\sumel \xb)}, \label{rcondone} \\
\Q(R_t\in \dd r\,|\, \xib_s=\mathbf{x})&= \frac{\Theta_{t}(r)}{\Theta_{s}(\sumel \xb)}f_{(t-s)\sumel \mathbf{m}}(r-\sumel \xb)\dd r. \label{rcondtwo}
\end{align}
\end{prop}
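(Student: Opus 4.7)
The plan is to derive the transition law \eqref{rcondtwo} directly from the GLP transition density \eqref{eq:GLP2} by marginalising to the coordinate sum, and then to recognise the resulting kernel as that of an LRB based on a time-scaled version of the underlying \levy process.

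First I would fix $0\le s<t<1$ and $\xb\in\R^n$ and compute the conditional density of $R_t$ given $\xib_s=\xb$ from \eqref{eq:GLP2}. The key observation is that the prefactor $\Theta_t(\sumel\yb)=\Theta_t(r)$ depends on $\yb$ only through $r=\sumel\yb$, so it can be pulled outside the marginalisation. What remains, $\prod_{i=1}^n f_{(t-s)m_i}(y_i-x_i)$, is the joint density of $n$ independent \levy increments with parameters $(t-s)m_i$, and the convolution property of $\{f_t\}$ yields that the induced density of their sum is $f_{(t-s)\sumel\mb}(r-\sumel\xb)$. This gives
\[
\Q(R_t\in\dd r\,|\,\xib_s=\xb)=\frac{\Theta_t(r)}{\Theta_s(\sumel\xb)}\,f_{(t-s)\sumel\mb}(r-\sumel\xb)\,\dd r,
\]
which is \eqref{rcondtwo}. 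Because the right-hand side depends on $\xb$ only through $\sumel\xb=R_s$, a conditioning argument shows that $\{R_t\}$ is Markov with respect to its own filtration with the same kernel after replacing $\sumel\xb$ by $y$.

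Next I would identify this Markov kernel with the LRB transition law \eqref{eq:levytransitionprob}. Take as underlying \levy process $\{Y_t\}_{0\le t\le 1}$ the time-scaled process whose marginal density at time $t$ is $f_{\sumel\mb t}$ (well-defined since $\{f_t\}$ is the density family of a \levy process). The unnormalised measures associated to the LRB with generating law $\nu$, time horizon $T=1$ and underlying process $Y$ are, by definition,
\[
\vartheta_s(\dd z;y)=\nu(\dd z)\,\frac{f_{\sumel\mb(1-s)}(z-y)}{f_{\sumel\mb}(z)}=\theta_s(\dd z;y),\qquad \vartheta_s(\R;y)=\Theta_s(y),
\]
so the kernel of $R$ is exactly that of an LRB with generating law $\nu$. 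The terminal marginal $R_1\law L_{u_n}\sim\nu$ is immediate from Definition \ref{def:GLP}, so the identification is complete.

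Finally, \eqref{rcondone} follows directly from the first line of \eqref{eq:levytransitionprob} applied to this LRB, after again using the Markov reduction to replace conditioning on $\xib_s=\xb$ by conditioning on $R_s=\sumel\xb$. I do not foresee a serious obstacle: the only analytic step is the repeated convolution identity used to pass from $\prod_i f_{(t-s)m_i}$ to $f_{(t-s)\sumel\mb}$, and the remainder is pattern-matching against \eqref{eq:levytransitionprob}.
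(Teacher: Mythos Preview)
Your derivation of \eqref{rcondtwo} by marginalising the GLP transition density \eqref{eq:GLP2} is correct and arguably more direct than the paper's route: the paper instead invokes the measure change of Corollary~\ref{radonnikodymglp}, passing to the $\widetilde{\Q}$-\levy process $\{R_t\}$ and back, whereas you stay under $\Q$ and use only the convolution identity (essentially the computation already recorded in Remark~\ref{harmonic}). That part is fine.

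There is, however, a small circularity in your route to \eqref{rcondone}. You argue that matching the $t<1$ transition kernel together with $R_1\sim\nu$ ``completes the identification'' of $\{R_t\}$ as an LRB, and then read off \eqref{rcondone} from the first line of \eqref{eq:levytransitionprob}. But the definition of an LRB requires the joint law with the terminal value (condition~(iv), or equivalently the transition kernel \emph{to} $R_1$), and knowing only the $t<1$ kernel plus the marginal law of $R_1$ does not pin this down: in principle nothing you have written rules out $R_1$ being decoupled from $\{R_t\}_{t<1}$. The paper avoids this by establishing \eqref{rcondone} first, via a direct Bayes computation using the explicit density of $\xib_s$ (equation \eqref{eq:denomlevy}), and only then recognises the pair \eqref{rcondone}--\eqref{rcondtwo} as the LRB transition law. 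Your argument is easily repaired---either marginalise the terminal transition in Proposition~\ref{markovtransitionprob} the same way you marginalised \eqref{eq:GLP2}, or run the Bayes step the paper uses---but as written the step ``so the identification is complete'' is not yet justified.
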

\begin{proof}
See Appendix.
\end{proof}
The next statement is a key result for defining stochastic integrals of integrable LRBs, and hence integrable GLPs.
\begin{prop}
\label{canonicalrep}
If $\E(|R_t|)<\infty$ for all $t\in[0,1]$, then $\{R_t\}_{0\leq t < 1}$ admits the canonical semimartingale representation
\begin{align}
\label{GLPsumcanonical}
R_t = \int_{0}^{t}\frac{\E( R_{1} \,|\, \F^{\xib}_{s}) - R_{s}}{1-s}\dd s + M_t,
\end{align}
for $0\leq t < 1$, where $\{M_t\}_{0\leq t < 1}$ is an $(\F_t^{\xib},\Q)$-martingale with initial state $M_0=0$.
\end{prop}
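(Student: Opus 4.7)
The plan is to define
$$M_t := R_t - \int_0^t \frac{\E(R_1\,|\,\F_u^{\xib}) - R_u}{1-u}\,\dd u, \qquad t\in[0,1),$$
and verify that $\{M_t\}_{0\leq t<1}$ is an $(\F_t^{\xib},\Q)$-martingale with $M_0 = 0$. Since $\{R_t\}$ is itself an LRB by Proposition \ref{prop:R_GLP}, the natural route is to first establish the identity with respect to the smaller filtration $\{\F_t^{R}\}$ and then lift it to $\{\F_t^{\xib}\}$; the lifting is essentially free, because the Markov property of $\xib$ together with Proposition \ref{prop:R_GLP} ensures that $\E(R_t\,|\,\F_s^{\xib}) = \E(R_t\,|\,R_s)$, so the drift integrand in the definition of $M$ is in fact $\F^R$-measurable.

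The key step is to establish the \emph{harness} identity
$$\E(R_t\,|\,\F_s^{\xib}) = \frac{(1-t)R_s + (t-s)\,\E(R_1\,|\,\F_s^{\xib})}{1-s}, \qquad 0\leq s<t<1. \qquad (\star)$$
To prove $(\star)$ I would express $\E(R_t\,|\,R_s=y)$ via the LRB transition law extracted from (\ref{eq:levytransitionprob}) and then condition further on $R_1$. Given $R_s=y$ and $R_1=z$, the process $\{R_u\}_{s\leq u \leq 1}$ has the law of a \levy bridge from $y$ at time $s$ to $z$ at time $1$, for the \levy process with marginal density $f_{u\sumel\mb}$. The classical linear-interpolation identity $\E(R_t\,|\,R_s=y,R_1=z) = y + \tfrac{t-s}{1-s}(z-y)$ for integrable \levy bridges then yields, upon integrating against the conditional law of $R_1$ given $R_s=y$, exactly $(\star)$.

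Granted $(\star)$, the martingale property of $M$ is a short Fubini/tower calculation. For $0\leq s<t<1$, using $\E(\E(R_1\,|\,\F_u^{\xib})\,|\,\F_s^{\xib}) = \E(R_1\,|\,\F_s^{\xib})$ and Fubini,
$$\E(M_t - M_s \,|\, \F_s^{\xib}) = \E(R_t\,|\,\F_s^{\xib}) - R_s - \int_s^t \frac{\E(R_1\,|\,\F_s^{\xib}) - \E(R_u\,|\,\F_s^{\xib})}{1-u}\,\dd u.$$
Substituting $(\star)$ with $t$ replaced by $u$, the numerator of the integrand equals $(1-u)(\E(R_1\,|\,\F_s^{\xib})-R_s)/(1-s)$, the factors of $1-u$ cancel, and the integral reduces to $(t-s)(\E(R_1\,|\,\F_s^{\xib})-R_s)/(1-s)$, which by $(\star)$ once more equals $\E(R_t\,|\,\F_s^{\xib}) - R_s$. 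Hence $\E(M_t-M_s\,|\,\F_s^{\xib})=0$. Integrability of the drift for $t<1$ follows from the hypothesis $\E|R_u|<\infty$ together with $\int_0^t(1-u)^{-1}\dd u <\infty$, which also justifies the Fubini exchange.

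The main obstacle is the harness identity $(\star)$: it depends on the linear-interpolation identity for \levy bridges, whose verification in full generality requires care to ensure that the assumed integrability of $R$ transfers to the conditioned bridge, and that the LRB transition law can be meaningfully decomposed by conditioning on the terminal value $R_1$. Once $(\star)$ is secured, the rest of the argument is essentially bookkeeping.
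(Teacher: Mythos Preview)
Your proposal is correct and follows essentially the same route as the paper's proof: both hinge on the affine conditional-expectation identity $(\star)$ (the paper's equation (\ref{conditionalexpsglp}), stated there as a consequence of $\{R_t\}$ being an LRB), and then perform the identical Fubini/tower computation to check $\E(M_t-M_s\,|\,\F_s^{\xib})=0$, with integrability handled by bounding $\E|\E(R_1\,|\,\xib_s)|$ and $\E|R_s|$ against $\int_0^t(1-u)^{-1}\dd u<\infty$. Your additional remarks on deriving $(\star)$ via conditioning on $R_1$ and the \levy-bridge linear-interpolation identity, and on why the drift is already $\F^R$-adapted, are sound elaborations of points the paper treats as known.
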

\begin{proof}
From Proposition \ref{prop:R_GLP}, $\{R_t\}_{0\leq t \leq 1}$ is an LRB. 
Hence, if $\E(|R_t|)<\infty$ for all $t\in(0,1]$, then
\begin{align}
\label{conditionalexpsglp}
\E(R_t \,|\, \xib_{s} = \xb) = \frac{1-t}{1-s}\sumel \xb + \frac{t-s}{1-s}\E(R_1 \,|\, \xib_{s} = \xb ), \hspace{0.1in} s\in[0,t).
\end{align}
We shall use (\ref{conditionalexpsglp}) to prove that $\{M_t\}_{0\leq t < 1}$ given in (\ref{GLPsumcanonical}) is an $\F_t^{\xib}$-martingale. Since $\{\xib_t\}_{0\leq t \leq 1}$ is Markov,
\begin{align*}
\E(M_t - M_s \,|\, \F_s^{\xib}) &= \E(R_t - R_s \,|\, \F_s^{\xib}) - \int_s^t \frac{\E(R_1 \,|\, \xib_s) - \E(R_u \,|\, \xib_s)}{1-u} \dd u \notag \\
&= \frac{1-t}{1-s}\sumel \xib_{s} + \frac{t-s}{1-s}\E(R_1 \,|\, \xib_{s}) - R_{s} - \int_s^t \frac{\E(R_1 \,|\, \xib_s)}{1-u} \dd u \notag \\
&\qquad + \int_s^t \frac{1}{1-u}\left(\frac{1-u}{1-s}\sumel \xib_{s} + \frac{u-s}{1-s}\E(R_1 \,|\, \xib_s)\right) \dd u  \notag \\
&=0,
\end{align*}
for $0\leq s <t \leq 1$.
Given $\E(|R_t|)<\infty$, $\E\left(\int_0^t (|\E(R_1 \,|\, \xib_s) - R_s|)/(1-s) \dd s\right) <\infty$ for $0\leq t < 1$ remains to be shown:
\begin{align*}
\E\left(\int_0^t \frac{|\E(R_1 \,|\, \xib_s) - R_s|}{1-s} \dd s\right) &\leq \E\left(\int_0^t \frac{|\E(R_1 \,|\, \xib_s)|}{1-s} \dd s\right) + \E\left(\int_0^t \frac{|R_s|}{1-s} \dd s\right) \notag \\
&= \int_0^t \E\left(\frac{|\E(R_1 \,|\, \xib_s)|}{1-s}\right) \dd s + \int_0^t \E\left(\frac{|R_s|}{1-s} \right)\dd s \notag \\
&< \infty,
\end{align*}
since $\{\E(R_1 \,|\, \F^{\xib}_t)\}_{0\leq t < 1}$ is a martingale.
Hence, $\E(|M_t|)<\infty$ for $0\leq t < 1$.
Finally, $M_0=0$ since $R_0=0$.
\end{proof}
\begin{rem}
Let $\alpha_t = (1-t)^{-1}$ and $\beta_t = \E(R_{1} \,|\, \xib_{t})$.
Then
\begin{align*}
\dd R_t = \alpha_t\left(\beta_t - R_t \right)\dd t + \dd M_t,
\end{align*}
for $0\leq t < 1$.
In this form, the dynamics of LRBs resemble those of an Ornstein-Uhlenbeck process, with an increasing mean-reversion rate $\{\alpha_t\}_{0\leq t < 1}$ and a state-dependent reversion level $\{\beta_t\}_{0\leq t < 1}$.
We can write
\begin{align*}
R_t = \int_{0}^{t}\frac{1-t}{(1-s)^2}\E( R_{1} \,|\, \xib_{s})\dd s + \int_{0}^{t} \frac{1-t}{1-s}\dd M_s, \hspace{0.25in} \text{for $0\leq s < t < 1$}.
\end{align*}
\end{rem}

The following two propositions are motivated by \cite{7}.
We first recall that a measurable process $\{H_t\}_{t\geq0}$ is called a \emph{harness}, if for all $t\geq 0$, $\E(|H_t|)<\infty$ and for all $0\leq a<b<c<d$, 
\begin{align*}
\E\left(\left. \frac{H_c - H_b}{c-b}  \,\right| \mathcal{H}_{a,d} \right) = \frac{H_d - H_a}{d-a},
\end{align*}
where $\mathcal{H}_{a,d}= \sigma(\{H_t\}_{t\leq a}, \{H_t\}_{t\geq d})$.
\begin{prop}
	\label{harnessprop}
	If $\E(\sumel \xib_t)<\infty$ for $t\in [0,1]$, then $\{\xib_t\}_{0\leq t \leq 1}$ and $\{R_t\}_{0\leq t \leq 1}$ are harnesses.
\end{prop}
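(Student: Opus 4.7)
The plan is to verify the harness identity
\[
	\E\left(\left.\frac{H_c - H_b}{c-b}\,\right|\,\mathcal{H}_{a,d}\right) = \frac{H_d - H_a}{d-a},\qquad 0\leq a<b<c<d\leq 1,
\]
coordinatewise for $H=\xib$ (and then to deduce it for $H=R$). My strategy has three steps: use two-sided Markov to reduce the conditioning $\sigma$-field to $\sigma(\xib_a,\xib_d)$; identify the resulting ``two-point'' bridge law as a product of \levy bridges; and compute its mean via a short characteristic-function argument. The main obstacle will be the second step: checking that the Doob factors $\Theta_t$ cancel cleanly in the bridge decomposition, and treating the boundary case $d=1$ correctly.

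First, by Proposition \ref{markovtransitionprob} the process $\{\xib_t\}$ is Markov under $\Q$ with respect to its own filtration. Every Markov process is Markov in reverse time with respect to the same filtration, so the standard argument combining forward Markov at $a$ with backward Markov at $d$ yields, for any bounded $\sigma(\xib_u:a\leq u\leq d)$-measurable $Y$,
\[
	\E\bigl(Y\bigm|\F^{\xib}_a\vee\sigma(\xib_u:u\geq d)\bigr)=\E(Y\mid\xib_a,\xib_d),
\]
with the same statement when $d=1$ (the future $\sigma$-field is then $\sigma(\xib_1)$). Taking $Y=\xib_c-\xib_b$ reduces the harness claim to evaluating $\E(\xib_c-\xib_b\mid\xib_a,\xib_d)$.

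Second, I apply Bayes' rule to the transition law (\ref{eq:GLP2}): the Doob factors $\Theta_t$ appear in both numerator and denominator and cancel, leaving the product kernel
\[
	\Q\bigl(\xib_c\in\d\yb\bigm|\xib_a=\xb,\,\xib_d=\mathbf{z}\bigr)=\prod_{i=1}^{n}\frac{f_{(c-a)m_i}(y_i-x_i)\,f_{(d-c)m_i}(z_i-y_i)}{f_{(d-a)m_i}(z_i-x_i)}\,\d y_i.
\]
Thus, conditionally on $(\xib_a,\xib_d)$, the coordinates of $\{\xib_t\}_{a\leq t \leq d}$ are independent \levy bridges of the underlying \levy process, time-scaled by the respective $m_i$. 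The case $d=1$ is handled identically using the formula for the conditional law of $\xib_t$ given $\xib_1$ appearing in the proof of the preceding proposition.

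Third, for any \levy process $X$ with finite first moment and characteristic exponent $\psi$, the independence of increments gives $\E(X_s\,e^{\i u X_t})=-\i s\,\psi'(u)\,e^{t\psi(u)}=(s/t)\,\E(X_t\,e^{\i u X_t})$ for $0<s\leq t$, whence by Fourier uniqueness $\E(X_s\mid X_t)=(s/t)\,X_t$. Applying this coordinatewise to the bridge kernel above yields $\E(\xi^{(i)}_c-\xi^{(i)}_b\mid\xib_a,\xib_d)=((c-b)/(d-a))(\xi^{(i)}_d-\xi^{(i)}_a)$; combined with the first step this is the harness identity for $\{\xib_t\}$ coordinate by coordinate. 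The identity for $\{R_t\}$ then follows either by summing the coordinates and invoking the tower property (noting $\F^R_{a,d}\subseteq\F^{\xib}_{a,d}$), or by running the same three-step argument directly on the LRB $\{R_t\}$ via the transition law (\ref{eq:levytransitionprob}).
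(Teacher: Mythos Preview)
Your approach and the paper's are essentially the same: reduce via the two-sided Markov property to the bridge law of $\{\xib_t\}_{a\leq t\leq d}$ given $(\xib_a,\xib_d)$, identify this as a product of \levy-bridge kernels, and then use linearity of the \levy-bridge mean. The paper carries out the density computation directly for a generic \levy bridge (after conditioning on $\xib_d$ to reduce to that case); you instead quote the two-sided Markov property and cancel the Doob factors $\Theta_t$ in (\ref{eq:GLP2}), which is a clean shortcut and arrives at the same bridge kernel.

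There is, however, a genuine gap in your third step. Your Fourier identity $\E(X_s\,e^{\i u X_t})=(s/t)\,\E(X_t\,e^{\i u X_t})$ requires the \emph{underlying \levy process} $X$ to have finite first moment, so that $\psi$ is differentiable and the term $\E(X_s\,e^{\i u X_s})$ makes sense. The hypothesis $\E(\sumel\xib_t)<\infty$ concerns the GLP (equivalently the \levy bridge), and does not imply integrability of the generating \levy process: a Cauchy bridge, for example, is integrable even though the Cauchy process is not. Thus your argument, as written, does not apply under the stated assumptions. The paper avoids this by working directly under the bridge measure: once you know $\{\xib_t\}_{a\leq t\leq d}$ given $(\xib_a,\xib_d)$ is a \levy bridge, the exchangeability of equal-length increments (together with bridge integrability) already gives $\E(\xi^{(i)}_c-\xi^{(i)}_a\mid\xib_a,\xib_d)=\tfrac{c-a}{d-a}(\xi^{(i)}_d-\xi^{(i)}_a)$, with no moment assumption on the ambient \levy process. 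Replace the characteristic-function step by this observation and your proof is complete.
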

\begin{proof}
See Appendix.
\end{proof}
\begin{prop}
\label{enlargedfiltrationprop}
Let $\varphi$ be a $C^1$-function. If $\E(|R_t|)<\infty$ for all $t\in(0,1]$, then the stochastic process $\{Z_t\}_{0\leq t < 1}$ defined by
\begin{align*}
Z_{t} =  \frac{\E( R_{1} \,|\, \xib_t) - R_t}{1-t}\int_t^1 \varphi(u)\d u + \int_0^t \varphi(u)\d R_u, \qquad (0\leq t <1),
\end{align*}
is an $(\mathcal{F}_{t}^{\xib},\Q)$-martingale.
\end{prop}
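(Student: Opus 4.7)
The plan is to rewrite $Z_t$ in semimartingale form using Proposition \ref{canonicalrep}, then to establish that the normalised drift process is itself a martingale, and finally to verify the martingale property of $Z$ by a direct computation of $\E(Z_t \mid \F_s^{\xib})$.

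First, set $\mu_t := (\E(R_1 \mid \xib_t) - R_t)/(1-t)$ and $\Phi(t) := \int_t^1 \varphi(u)\,\dd u$, so that $\Phi'(t) = -\varphi(t)$. By Proposition \ref{canonicalrep}, $R$ admits the decomposition $\dd R_u = \mu_u\,\dd u + \dd M_u$ with $M$ an $(\F_t^{\xib},\Q)$-martingale, which lets me rewrite
\[
Z_t \;=\; \mu_t\,\Phi(t) \;+\; \int_0^t \varphi(u)\mu_u\,\dd u \;+\; N_t, \qquad N_t := \int_0^t \varphi(u)\,\dd M_u.
\]
Since $\varphi$ is $C^1$ and hence bounded on $[0,1]$, $N$ is an $(\F_t^{\xib},\Q)$-martingale.

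The key intermediate step is to show that $\{\mu_t\}_{0\leq t<1}$ is itself an $(\F_t^{\xib},\Q)$-martingale. Using the Markov property of $\xib$ together with identity (\ref{conditionalexpsglp}) from the proof of Proposition \ref{canonicalrep}, one has $\E(\E(R_1 \mid \xib_t) \mid \xib_s) = \E(R_1 \mid \xib_s)$ and $\E(R_t \mid \xib_s) = \tfrac{1-t}{1-s}R_s + \tfrac{t-s}{1-s}\E(R_1 \mid \xib_s)$; a one-line algebraic simplification then yields $\E(\mu_t \mid \F_s^{\xib}) = \mu_s$.

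With this in hand, I condition each piece of the display on $\F_s^{\xib}$: the $N$-term contributes $N_s$; the boundary term contributes $\mu_s\,\Phi(t)$; and by Fubini together with the martingality of $\mu$, the drift integral contributes $\int_0^s \varphi(u)\mu_u\,\dd u + \mu_s\int_s^t \varphi(u)\,\dd u$. Using $\int_s^t \varphi(u)\,\dd u = \Phi(s) - \Phi(t)$, the $\mu_s$-coefficients cancel exactly, leaving precisely $Z_s$. Integrability of $Z_t$ is the least delicate point: $\E|\mu_t| < \infty$ because $R_t$ is integrable by assumption and $\{\E(R_1 \mid \F_t^{\xib})\}$ is a martingale, and boundedness of $\varphi$ handles the remaining estimates via Fubini. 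The main conceptual observation is really the martingality of $\mu_t$ --- once that is identified, the rest is a bookkeeping cancellation forced by the identity $\Phi' = -\varphi$.
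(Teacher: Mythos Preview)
Your argument is correct but follows a different route from the paper. The paper's proof is shorter and more conceptual: it computes, via integration by parts and (\ref{conditionalexpsglp}), that
\[
\E\!\left(\left.\int_t^1 \varphi(u)\,\dd R_u \,\right|\, \F_t^{\xib}\right)=\frac{\E(R_1\mid\xib_t)-R_t}{1-t}\int_t^1\varphi(u)\,\dd u,
\]
so that $Z_t=\E(\int_0^1\varphi(u)\,\dd R_u\mid\F_t^{\xib})$ is a conditional expectation of a fixed random variable and hence automatically a martingale. Your approach instead passes through the semimartingale decomposition of Proposition~\ref{canonicalrep}, isolates the auxiliary fact that $\mu_t=(\E(R_1\mid\xib_t)-R_t)/(1-t)$ is itself an $(\F_t^{\xib},\Q)$-martingale, and then checks $\E(Z_t\mid\F_s^{\xib})=Z_s$ by a direct cancellation. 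The paper's argument is cleaner and identifies \emph{what} $Z_t$ is; yours is more hands-on but has the virtue of making the martingality of $\mu_t$ explicit, which is an interesting observation in its own right (and characteristic of harnesses). One small point: your assertion that $N_t=\int_0^t\varphi(u)\,\dd M_u$ is a \emph{true} martingale deserves a word of justification, since $M$ is not assumed square-integrable; the $C^1$ assumption lets you write $N_t=\varphi(t)M_t-\int_0^t M_u\,\varphi'(u)\,\dd u$ via integration by parts, from which integrability and the martingale property follow directly.
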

\begin{proof}
We have
\begin{align*}
\E\left(\left. \int_t^1 \varphi(u)\dd R_u \,\right| \mathcal{F}_t^{\xib} \right) &=\varphi(1)\,\E( R_{1} \,|\, \mathcal{F}_t^{\xib}) - \varphi(t)R_t - \int_t^1\E(R_u \,|\, \mathcal{F}_t^{\xib})\d \varphi(u) \notag \\
&= \frac{\E( R_{1} \,|\, \xib_t) - R_t}{1-t}\int_t^1 \varphi(u)\dd u,
\end{align*}
from the integration by parts formula, (\ref{conditionalexpsglp}) and the Markov property of $\{\xib_t\}_{0\leq t \leq 1}$. Hence, $Z_t = \E(\int_0^1 \varphi(u)\d R_u \,|\, \mathcal{F}_t^{\xib})$, which is an $(\mathcal{F}_{t}^{\xib},\Q)$-martingale.
\end{proof}

Similar to Proposition \ref{canonicalrep}, we have the following result (we omit the proof to avoid repetition):
\begin{prop} \label{GLPcanonicalcoro}
If $\E(|\xib_t|)<\infty$ for all $t\in(0,1]$, then $\{\xib_t\}_{0\leq t < 1}$ admits the canonical semimartingale representation
\begin{align}
\label{GLPcanonicalsemimartingale}
\xib_t = \int_{0}^{t}\frac{\E( \xib_{1} \,|\, \F^{\xib}_{s}) - \xib_{s}}{1-s}\d s + \mathbf{M}_t, \qquad (0\leq t < 1),
\end{align}
where $\{\mathbf{M}_t\}_{0\leq t < 1}$ is an $(\F_t^{\xib},\Q)$-martingale.
\end{prop}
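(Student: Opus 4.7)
The plan is to mimic the argument of Proposition \ref{canonicalrep} coordinatewise, once the vector analogue of the conditional-expectation identity (\ref{conditionalexpsglp}) is in hand. Specifically, I first aim to establish
\begin{align*}
\E(\xib_t \,|\, \xib_s = \xb) = \frac{1-t}{1-s}\xb + \frac{t-s}{1-s}\E(\xib_1 \,|\, \xib_s = \xb), \qquad 0\leq s<t<1,
\end{align*}
and then use this together with the Markov property of $\{\xib_t\}_{0\leq t \leq 1}$ to verify that each coordinate of the candidate process $\mathbf{M}_t = \xib_t - \int_0^t (1-s)^{-1}(\E(\xib_1 \,|\, \F_s^{\xib}) - \xib_s)\,\dd s$ is a scalar $(\F_t^{\xib}, \Q)$-martingale.

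To establish the vector identity, I condition first on $\xib_1$ in addition to $\xib_s$. By the proposition preceding Proposition \ref{prop:marginaltransition}, the conditional law of $\{\xib_t\}_{0\leq t \leq 1}$ given $\xib_1 = \mathbf{z}$ is that of a vector of independent \levy bridges, with the $i$-th tied down to $z_i$ at time $1$. This coordinatewise independence implies that further conditioning on $\xib_s = \xb$ reduces, for the $i$-th coordinate, to conditioning on $\xi_s^{(i)} = x_i$ only. Hence under $\Q(\cdot \,|\, \xib_s = \xb,\, \xib_1 = \mathbf{z})$, the random variable $\xi^{(i)}_t$ is distributed as the value at time $t$ of a \levy bridge passing through $x_i$ at time $s$ and $z_i$ at time $1$. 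For such a bridge with integrable endpoints, a short characteristic-function calculation (equivalently, applying the LRB conditional-mean formula (\ref{conditionalexpsglp}) with a Dirac generating law, after the obvious time shift) yields
\begin{align*}
\E(\xi^{(i)}_t \,|\, \xib_s = \xb,\, \xib_1 = \mathbf{z}) = \frac{1-t}{1-s}x_i + \frac{t-s}{1-s}z_i.
\end{align*}
Taking the tower expectation $\E(\cdot \,|\, \F_s^{\xib})$ and invoking the Markov property of $\{\xib_t\}$ gives the $i$-th coordinate of the target identity; stacking across $i$ gives it in vector form.

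With the identity secured, the argument of Proposition \ref{canonicalrep} applies componentwise with only cosmetic changes. For each $i$, the martingale identity $\E(M_t^{(i)} - M_s^{(i)} \,|\, \F_s^{\xib}) = 0$ follows by the same algebraic cancellation as in the scalar case, integrability of the Lebesgue-integral term is controlled by the analogous bound using that $\{\E(\xi^{(i)}_1 \,|\, \F_t^{\xib})\}_{0 \leq t < 1}$ is a martingale, and $\mathbf{M}_0 = \mathbf{0}$ holds because $\xib_0 = \mathbf{0}$.

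The main obstacle is the vector conditional-expectation identity itself: the marginal LRB structure of each coordinate $\{\xi_t^{(i)}\}$ alone does not suffice, because $\F_s^{\xib}$ is strictly larger than $\F_s^{\xi^{(i)}}$ and so the scalar LRB conditional-mean formula cannot be applied directly to the $i$-th coordinate. The decoupling obtained by conditioning on $\xib_1$ — which restores the independent \levy bridge structure coordinatewise — is the essential trick; once this is in place, the remainder is, as the paper notes, a routine repetition of the scalar argument.
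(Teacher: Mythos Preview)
Your proposal is correct and matches the paper's intended route: the paper explicitly omits the proof, stating that it is a repetition of the argument of Proposition \ref{canonicalrep}, and your plan does exactly that coordinatewise. You correctly identify that the only non-trivial ingredient beyond the scalar case is the vector analogue of (\ref{conditionalexpsglp}) with respect to the full filtration $\F^{\xib}$, and your derivation of it---conditioning on $\xib_1$ to reduce to independent \levy bridges and then towering---is precisely the mechanism the paper itself uses elsewhere (in the proof of Proposition \ref{harnessprop}) to obtain the linear conditional mean for each coordinate.
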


In \cite{16}, it is shown that Archimedean survival processes satisfy the weak Markov consistency condition, but not necessarily the strong Markov consistency condition. 
Motivated by this, Proposition \ref{weakstrong} below provides a generalised version of this result for GLPs. 
First, we recall the weak and strong Markov consistency conditions.
Let $\{\mathbf{X}_t\}_{t\geq0}$ be an $n$-dimensional real-valued Markov process and $\F_t^{\mathbf{X}}=\sigma(\{\mathbf{X}_u\}:0\leq u \leq t)$. 
Also, for each coordinate process $\{X^{(i)}_t\}_{t\geq 0}$, $i=1,\ldots,n$, write $\F_t^{X^{(i)}}=\sigma(\{X^{(i)}_u\}:0\leq u \leq t)\subset \F_t^{\mathbf{X}}$. 
The process $\{\mathbf{X}_t\}_{t\geq0}$ satisfies the weak Markov consistency condition if
\begin{align}
\label{weakmarkovcond}
\Q\left(\left. X^{(i)}_t \in B \,\right| \F_s^{X^{(i)}} \right) = \Q\left(\left. X^{(i)}_t \in B \,\right| X^{(i)}_s \right),
\end{align} 
for every $i=1,\ldots,n$ and every $B\in\mathcal{B}(\mathbb{R})$. Further, $\{\mathbf{X}_t\}_{t\geq 0}$ satisfies the strong Markov consistency condition if
\begin{align}
\label{strongmarkovcond}
\Q\left(\left. X^{(i)}_t \in B \,\right| \F_s^{\mathbf{X}} \right) = \Q\left(\left. X^{(i)}_t \in B \,\right| X^{(i)}_s \right),
\end{align} 
for every $i=1,\ldots,n$ and every $B\in\mathcal{B}(\mathbb{R})$. 

\begin{prop}
\label{weakstrong}
Any GLP $\{\xib_{t}\}_{0\leq t \leq 1}$ is weak Markov consistent, but not necessarily strong Markov consistent. 
\end{prop}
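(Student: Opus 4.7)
The plan is to invoke the two formulas in Proposition \ref{prop:marginaltransition}, which already provide the marginally and fully conditioned transition laws of a coordinate process $\{\xi_t^{(i)}\}_{0\leq t \leq 1}$ in closed form; the weak/strong consistency comparison then reduces to reading off the dependence of these densities on $\xib_s$.

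For weak Markov consistency, I would first note that each coordinate $\{\xi_t^{(i)}\}_{0\leq t \leq 1}$ is a subprocess of the master LRB $\{L_t\}_{0\leq t \leq u_n}$ and, since subprocesses of LRBs are themselves LRBs (as cited after Definition \ref{def:GLP}), is Markov in its own filtration. Equivalently, part (1) of Proposition \ref{prop:marginaltransition} gives $\Q(\xi_t^{(i)}\in \dd y_i\,|\,\xi_s^{(i)}=x_i)$ as a function of $x_i$ alone, so
\[
\Q\!\left(\xi_t^{(i)}\in B\,\middle|\,\F_s^{\xi^{(i)}}\right)=\Q\!\left(\xi_t^{(i)}\in B\,\middle|\,\xi_s^{(i)}\right),
\]
which is exactly (\ref{weakmarkovcond}).

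For strong Markov consistency, I would compare the two transition laws in Proposition \ref{prop:marginaltransition}. The fully conditioned law depends on the entire vector $\xb$ through both $x_i$ and $\sumel \xb$ (via $\Theta_t^{(i)}(\xb,y_i)/\Theta_s(\sumel\xb)$), while the marginally conditioned law involves only $x_i$ (via $\Psi_t^{(i)}(y_i)/\Psi_s^{(i)}(x_i)$). Strong Markov consistency (\ref{strongmarkovcond}) would force these to coincide for every admissible $\xb$, which is an extremely restrictive identity on the generating law $\nu$ and the densities $\{f_t\}$. To complete the ``not necessarily'' clause I need only exhibit a single GLP where it fails; the cleanest route is to invoke Remark \ref{remasp}, which states that Archimedean survival processes form a subclass of GLPs, together with the result of \cite{16} that these processes are generally not strong Markov consistent. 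Thus the same counterexamples transfer to GLPs.

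The only genuinely delicate point is the strong-consistency direction: in principle one could worry about pathological coincidences between $\Theta_t^{(i)}/\Theta_s$ and $\Psi_t^{(i)}/\Psi_s^{(i)}$, but since we only claim ``not necessarily,'' the known Archimedean survival counterexample from \cite{16} is sufficient and there is no need to carry out a direct inspection of the two ratios. The rest of the argument is a direct citation of Proposition \ref{prop:marginaltransition} and the LRB Markov property.
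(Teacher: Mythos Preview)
Your proposal is correct. The weak Markov consistency half is identical to the paper's argument: each coordinate is a subprocess of the master LRB, hence an LRB and therefore Markov in its own filtration.

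For the ``not necessarily strong'' half, you take a slightly different route from the paper. The paper argues directly from the structure of the conditional increment law: it observes that $\Q(\xi^{(i)}_t-\xi^{(i)}_s\in\dd y\,|\,\F_s^{\xib})$ depends on $\xib_s$ only through $\sumel\xib_s$, and that this can agree with $\Q(\xi^{(i)}_t-\xi^{(i)}_s\in\dd y\,|\,\xi^{(i)}_s)$ only when the increment is independent of both $\sumel\xib_s$ and $\xi^{(i)}_s$, which forces the coordinates to be independent \levy processes. You instead read the dependence off the two formulas in Proposition~\ref{prop:marginaltransition} and then invoke Remark~\ref{remasp} together with the published counterexample from \cite{16} for Archimedean survival processes. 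Both approaches are valid; yours is arguably tidier for the ``not necessarily'' clause since it points to a concrete external counterexample rather than an informal characterisation of when equality could hold, while the paper's version has the advantage of being self-contained and identifying exactly when strong consistency \emph{does} hold (namely the degenerate independent-\levy case).
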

\begin{proof}
Each coordinate $\{\xi^{(i)}_t\}_{0\leq t \leq 1}$ of the GLP $\{\xib_t\}_{0\leq t \leq 1}$ is an LRB, since every subprocess of an LRB is an LRB (see \cite{13}). Thus, (\ref{weakmarkovcond}) is satisfied for every $i=1,\ldots,n$, every $B\in\mathcal{B}(\mathbb{R})$ and all $0\leq s < t \leq 1$.
However, (\ref{strongmarkovcond}) does not necessarily hold since $\Q\left(\left. \xi^{(i)}_t -\xi^{(i)}_s  \in \dd y \,\right| \F_s^{\xib} \right) = \Q\left(\left. \xi^{(i)}_t -\xi^{(i)}_s  \in \dd y \,\right| \sum_j \xi^{(j)}_s  \right)$ is only equal to $\Q\left(\left. \xi^{(i)}_t -\xi^{(i)}_s  \in \dd y \,\right| \xi^{(i)}_s \right)$ if both $\sum_j \xi^{(j)}_s$ and $\xi^{(i)}_s$ are independent from the increment $\xi^{(i)}_t -\xi^{(i)}_s$ for all $0\leq s < t \leq 1$. In such a case the coordinates of $\{\xib_t\}_{0\leq t \leq 1}$ are independent \levy processes.
\end{proof}

In the same spirit, we shall introduce \textit{weak and strong semimartingale consistency} conditions. Definition \ref{weakstrongsemimartingaledefn} below goes beyond a Markov setting, but in the context of GLPs, it offers links to Markov consistency.
\begin{defn}
\label{weakstrongsemimartingaledefn}
Let $\{\sib_t\}_{t\geq 0}$ be an $(\mathcal{F}_t^{\sib},\Q)$-semimartingale, where $\mathcal{F}_t^{\sib}=\sigma(\{\sib_u\}:0\leq u\leq t)$. Let $\{S_t^{(i)}\}_{t\geq0}$ be a coordinate of $\{\sib_t\}_{t\geq0}$, and $\mathcal{F}_t^{S^{(i)}}=\sigma(\{S^{(i)}_u\}:0\leq u\leq t)$, for $i=1,\ldots,n$.
\begin{enumerate}
	\item If $\{S_t^{(i)}\}_{t\geq0}$ admits a decomposition $S_t^{(i)} = a_t^{(i)} + m_t^{(i)}$, where $\{a_t^{(i)}\}_{t\geq0}$ is a \cadlag $\{\mathcal{F}_t^{S^{(i)}}\}$-adapted process with bounded variation and $\{m_t^{(i)}\}_{t\geq0}$ is an $(\mathcal{F}_t^{S^{(i)}},\Q)$-local martingale, then $\{\sib_t\}_{t\geq0}$ is \emph{weakly semimartingale consistent} with respect to $\{S_t^{(i)}\}_{t\geq0}$. 
	If this holds for every $i=1,\ldots,n$, then $\{\sib_t\}_{t\geq0}$ satisfies the \emph{weak semimartingale consistency} condition.
	
	\item Let $\{\sib_t\}_{t\geq0}$ be decomposed as $\sib_t= \boldsymbol{A}_t + \boldsymbol{M}_t$, where $\{\boldsymbol{A}_t\}_{t\geq0}$ is a \cadlag $\{\mathcal{F}_t^{\sib}\}$-adapted process with bounded variation and $\{\boldsymbol{M}_t\}_{t\geq0}$ is an $(\mathcal{F}_t^{\sib},\Q)$-local martingale, with coordinates $\{A_t^{(i)}\}_{t\geq0}$ and $\{M_t^{(i)}\}_{t\geq0}$, respectively.
	Given that $S_t^{(i)}=A_t^{(i)} + M_t^{(i)}$, if $\{A_t^{(i)}\}_{t\geq0}$ is $\{\mathcal{F}_t^{S^{(i)}}\}$-adapted and $\{M_t^{(i)}\}_{t\geq0}$ is an $(\mathcal{F}_t^{S^{(i)}},\Q)$-local martingale, then $\{\sib_t\}_{t\geq0}$ is \emph{strongly semimartingale consistent} with respect to $\{S_t^{(i)}\}_{t\geq0}$. 
	If this holds for every $i=1,\ldots,n$, then $\{\sib_t\}_{t\geq0}$ satisfies the \emph{strong semimartingale consistency} condition.
\end{enumerate}
\end{defn}
\begin{prop}
\label{weakstrongsemimartingale}
Any GLP $\{\xib_{t}\}_{0\leq t < 1}$, where $\E(|\xib_t|)<\infty$ for all $t\in(0,1]$, is weak semimartingale consistent, but not necessarily strong semimartingale consistent. 
\end{prop}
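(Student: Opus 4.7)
The plan is to deduce both assertions from the canonical semimartingale representations already at our disposal. For the weak direction, each coordinate $\{\xi^{(i)}_t\}_{0\leq t<1}$ is itself an LRB, as recorded in the proof of Proposition \ref{weakstrong}. Under the integrability hypothesis $\E(|\xib_t|)<\infty$, Proposition \ref{canonicalrep} therefore applies to each coordinate in place of $\{R_t\}$ and yields
\[
\xi^{(i)}_t = \int_0^t \frac{\E(\xi^{(i)}_1 \,|\, \F_s^{\xi^{(i)}}) - \xi^{(i)}_s}{1-s}\,\d s + m_t^{(i)},
\]
where $\{m_t^{(i)}\}_{0\leq t<1}$ is an $(\F_t^{\xi^{(i)}},\Q)$-martingale and the drift is continuous, $\{\F_t^{\xi^{(i)}}\}$-adapted, and of bounded variation on compacts of $[0,1)$. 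This is precisely the decomposition demanded by Definition \ref{weakstrongsemimartingaledefn}(1), establishing weak semimartingale consistency for every coordinate at once.

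For the strong direction, I would invoke Proposition \ref{GLPcanonicalcoro} to obtain the canonical $(\F_t^{\xib},\Q)$-decomposition $\xib_t = \boldsymbol{A}_t + \boldsymbol{M}_t$, whose $i$-th coordinate drift reads
\[
A_t^{(i)} = \int_0^t \frac{\E(\xi^{(i)}_1 \,|\, \F_s^{\xib}) - \xi^{(i)}_s}{1-s}\,\d s.
\]
Strong semimartingale consistency would require that $\{A_t^{(i)}\}$ be $\{\F_t^{\xi^{(i)}}\}$-adapted and that $\{M_t^{(i)}\}$ be an $(\F_t^{\xi^{(i)}},\Q)$-local martingale. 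By uniqueness of the canonical decomposition of a special semimartingale in the filtration $\{\F_t^{\xi^{(i)}}\}$, these two conditions identify $(A^{(i)},M^{(i)})$ with the pair $(a^{(i)},m^{(i)})$ produced in the weak part, and hence force $\E(\xi^{(i)}_1 \,|\, \F_s^{\xib}) = \E(\xi^{(i)}_1 \,|\, \F_s^{\xi^{(i)}})$ almost surely for all $s\in[0,1)$ and every $i$. This is exactly the strong Markov consistency identity, which by Proposition \ref{weakstrong} obtains only when the coordinates of $\{\xib_t\}_{0\leq t<1}$ are independent \levy processes.

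The main obstacle is to defend the ``not necessarily'' clause beyond pointing at Proposition \ref{weakstrong}. The cleanest argument is to observe that whenever the generating law $\nu$ is non-degenerate, the constraint coupling the coordinates through the terminal sum $R_1$ guarantees that $\F_s^{\xib}$ carries strictly more information about $\xi^{(i)}_1$ than $\F_s^{\xi^{(i)}}$ does, so the identity above fails; any non-product GLP, including the Brownian or Poisson Liouville processes introduced later, furnishes a concrete witness to the failure of strong semimartingale consistency.
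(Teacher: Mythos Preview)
Your proof is correct and follows essentially the same route as the paper: both parts rest on the canonical semimartingale representations (Proposition \ref{canonicalrep} coordinate-wise for the weak direction, Proposition \ref{GLPcanonicalcoro} for the strong), and the failure of strong consistency is traced back to the discrepancy between $\E(\xi^{(i)}_1\,|\,\F_s^{\xib})$ and $\E(\xi^{(i)}_1\,|\,\F_s^{\xi^{(i)}})$ via Proposition \ref{weakstrong}. Your invocation of the uniqueness of the canonical decomposition of a special semimartingale is a nice sharpening that the paper leaves implicit; one small imprecision is that the resulting expectation identity is a \emph{consequence} of strong Markov consistency rather than the identity itself, but your final paragraph correctly supplies a concrete witness to its failure.
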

\begin{proof}
Let $\E(|\xib_t|)<\infty$ for all $t\in(0,1]$ and define $\alpha_t=(1-t)^{-1}$ for $t<1$.
Following similar steps to the proof of Proposition \ref{canonicalrep}, each coordinate of $\{\xib_{t}\}_{0\leq t < 1}$ admits a decomposition $\xi^{(i)}_t = a^{(i)}_t + m^{(i)}_t$, where
\begin{align*} 
a^{(i)}_t = \int_{0}^{t}\alpha_s\left(\E\left(\left. \xi^{(i)}_1 \,\right| \F^{\xi^{(i)}}_{s}\right) - \xi^{(i)}_s\right)\dd s,
\end{align*} 
which is $\{\mathcal{F}_t^{\xi^{(i)}}\}$-adapted, and $\{m^{(i)}_t\}_{0\leq t < 1}$ is an $(\F_t^{\xi^{(i)}},\Q)$-martingale, for $i=1,\ldots,n$.
Hence, $\{\xib_{t}\}_{0\leq t < 1}$ is weak semimartingale consistent.
From Proposition \ref{GLPcanonicalcoro}, we also know that $\xi^{(i)}_t=A^{(i)}_t+M^{(i)}_t$, where 
\begin{align*}
A^{(i)}_t = \int_{0}^{t}\alpha_s\left(\E\left(\left. \xi^{(i)}_1 \,\right| \F^{\xib}_{s}\right) - \xi^{(i)}_s\right)\dd s,
\end{align*} 
which is $\{\mathcal{F}_t^{\xib}\}$-adapted, and $\{M_t^{(i)}\}_{0\leq t < 1}$ is an $(\mathcal{F}_t^{\xib},\Q)$-martingale. Since $\{\xib_{t}\}_{0\leq t \leq 1}$ is Markov and using Proposition \ref{weakstrong}, we know that $\E( \xi^{(i)}_1 \,|\, \xib_{t})$ is not necessarily equal to $\E( \xi^{(i)}_1 \,|\, \xi^{(i)}_{t})$.
Hence, $\{A^{(i)}_t\}_{0\leq t < 1}$ is not necessarily $\{\mathcal{F}_t^{\xi^{(i)}}\}$-adapted. Also, $\{M_t^{(i)}\}_{0\leq t < 1}$ is not necessarily an $(\mathcal{F}_t^{\xi^{(i)}},\Q)$-martingale. 
\end{proof}
We used Proposition \ref{weakstrong} to prove Proposition \ref{weakstrongsemimartingale}; we shall note another link between semimartingale consistency and Markov consistency. 
From \cite{17}, if a Markov process $\{\mathbf{X}_t\}_{t\geq0}$ satisfies the weak Markov consistency with respect to its marginal $\{X^{(i)}_t\}_{t\geq0}$, then $\{\mathbf{X}_t\}_{t\geq0}$ is also strongly Markov consistent with respect to $\{X^{(i)}_t\}_{t\geq0}$ if and only if $\{\mathcal{F}_t^{X^{(i)}}\}_{t\geq0}$ is $\Q$-immersed in $\{\mathcal{F}_t^{\mathbf{X}}\}_{t\geq0}$. 
Here, $\Q$-immersion means that if $\{X^{(i)}_t\}_{t\geq0}$ is an $(\mathcal{F}_t^{X^{(i)}},\Q)$-local martingale, then it is an $(\mathcal{F}_t^{\mathbf{X}},\Q)$-local martingale. 
As an opposite direction to $\Q$-immersion, we prove a result that links strong martingale consistency and strong Markov consistency.
\begin{prop}
\label{strongtostrong}
Let $\{\sib_t\}_{t\geq 0}$ be a Markov $(\mathcal{F}_t^{\sib},\Q)$-martingale, satisfying weak Markov consistency. 
Then, $\{\sib_t\}_{t\geq0}$ is strong semimartingale consistent if and only if it is strong Markov consistent.
\end{prop}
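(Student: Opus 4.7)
Our approach will be to leverage the natural semimartingale decomposition $\boldsymbol{A}\equiv 0$, $\boldsymbol{M}\equiv \sib$, which is available because $\sib$ is itself an $(\F_t^{\sib},\Q)$-martingale. With this decomposition in hand, both consistency properties reduce to statements about whether a single coordinate inherits structure from the larger filtration.

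For the direction strong Markov consistent implies strong semimartingale consistent, the plan is to take $\boldsymbol{A}_t=0$ and $\boldsymbol{M}_t=\sib_t$, so that $A^{(i)}_t=0$ is $\F_t^{S^{(i)}}$-adapted trivially and $M^{(i)}_t=S^{(i)}_t$. It then remains to verify that $\{S^{(i)}_t\}$ is an $(\F_t^{S^{(i)}},\Q)$-local martingale. Using strong Markov consistency together with the $\F_t^{\sib}$-martingale property of $\sib$, I would obtain $\E(S^{(i)}_t\,|\,S^{(i)}_s)=\E(S^{(i)}_t\,|\,\F_s^{\sib})=S^{(i)}_s$, and weak Markov consistency promotes this to $\E(S^{(i)}_t\,|\,\F_s^{S^{(i)}})=\E(S^{(i)}_t\,|\,S^{(i)}_s)=S^{(i)}_s$.

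For the direction strong semimartingale consistent implies strong Markov consistent, I would work with the same trivial decomposition; strong semimartingale consistency then delivers that each $\{S^{(i)}_t\}$ is an $(\F_t^{S^{(i)}},\Q)$-local martingale. To extract strong Markov consistency, I plan to use the Markov property of $\sib$ to reduce $\Q(S^{(i)}_t\in B\,|\,\F_s^{\sib})=\Q(S^{(i)}_t\in B\,|\,\sib_s)$, and then invoke weak Markov consistency, which gives $\Q(S^{(i)}_t\in B\,|\,\F_s^{S^{(i)}})=\Q(S^{(i)}_t\in B\,|\,S^{(i)}_s)$. The task is then to show that the function $\sib_s\mapsto \Q(S^{(i)}_t\in B\,|\,\sib_s)$ depends only on the $i$-th coordinate; my plan is to apply the semimartingale-consistency argument to $\{f(S^{(i)}_t)\}$ for a rich family of bounded Borel $f$ (say bounded continuous or characters $e^{\i u x}$), identify the resulting $\F^{S^{(i)}}_t$-compensators via weak Markov consistency, and pass to general Borel sets by a monotone class argument.

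The main obstacle is precisely this reverse direction: strong semimartingale consistency only directly constrains the first conditional moment of $S^{(i)}$, while strong Markov consistency constrains the entire conditional distribution. Bridging this gap will require exploiting that under weak Markov consistency the marginal transition kernel is already determined by $S^{(i)}_s$ alone, so that any candidate for $\Q(S^{(i)}_t\in B\,|\,\sib_s)$ that is consistent with the Markov kernel of $\sib$ and with the $(\F_t^{S^{(i)}},\Q)$-local martingale property of test functionals of $S^{(i)}$ must collapse to a function of $S^{(i)}_s$. I expect the cleanest route is via characteristic functions, since then the semimartingale-consistency hypothesis applied to $\{e^{\i u S^{(i)}_t}\}$ directly identifies the joint and marginal conditional characteristic functions, yielding strong Markov consistency by Fourier uniqueness.
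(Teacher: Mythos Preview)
Your treatment of the direction ``strong Markov consistent $\Rightarrow$ strong semimartingale consistent'' is correct and coincides with the paper's argument: with the trivial decomposition $\boldsymbol{A}\equiv 0$, $\boldsymbol{M}\equiv\sib$, strong Markov consistency combined with the $(\F_t^{\sib},\Q)$-martingale property and weak Markov consistency gives $\E(S^{(i)}_t\,|\,\F_s^{S^{(i)}})=S^{(i)}_s$, so each coordinate is an $(\F_t^{S^{(i)}},\Q)$-martingale.

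For the converse, however, your proposed repair does not go through. You correctly diagnose the obstacle: applied to the trivial decomposition, strong semimartingale consistency only says that $S^{(i)}$ is an $(\F_t^{S^{(i)}},\Q)$-local martingale, i.e.\ it pins down conditional first moments, whereas strong Markov consistency concerns full conditional laws. Your plan to ``apply the semimartingale-consistency hypothesis to $\{e^{\i u S^{(i)}_t}\}$'' misreads Definition~\ref{weakstrongsemimartingaledefn}: that definition speaks only about the $(\F_t^{\sib},\Q)$-decomposition of $\sib$ itself and grants nothing about decompositions of nonlinear functionals of its coordinates. There is no mechanism in the hypothesis that transfers the $(\F_t^{S^{(i)}},\Q)$-martingale property of $S^{(i)}$ to any corresponding structure for $e^{\i u S^{(i)}}$, so the Fourier-uniqueness route has no input to work with. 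It is worth noting that the paper's own argument for this direction stops at the first-moment identity
\[
\int_{\R} x\,\P(S^{(i)}_t\in\dd x\,|\,\sib_u)=\int_{\R} x\,\P(S^{(i)}_t\in\dd x\,|\,S^{(i)}_u),
\]
and does not explicitly upgrade this to equality of conditional distributions either; so the gap you flagged is real, and neither your characteristic-function idea nor the displayed chain in the paper closes it without some additional input.
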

\begin{proof}
Since $\{\sib_t\}_{t\geq 0}$ is an $(\mathcal{F}_t^{\sib},\Q)$-martingale, we have $\E( S^{(i)}_t \,|\, \F^{\sib}_{u})=S^{(i)}_u$ for $0\leq u < t$. 
Then, if $\{\sib_t\}_{t\geq0}$ is strong martingale consistent, we have $\E( S^{(i)}_t \,|\, \F^{\sib}_{u})=\E( S^{(i)}_t \,|\, \mathcal{F}_u^{S^{(i)}})=S^{(i)}_u$. 
Thus, given that $\{\sib_t\}_{t\geq 0}$ is Markovian satisfying weak Markov consistency,
\begin{multline*}
\E( S^{(i)}_t \,|\, \F^{\sib}_{u}) = \int_{\R}x\P\left(\left. S^{(i)}_t \in \dd x \,\right| \F^{\sib}_{u} \right)=\int_{\R}x\P\left(\left. S^{(i)}_t \in \dd x \,\right| \sib_u \right) \\
=\E( S^{(i)}_t \,|\, \mathcal{F}_u^{S^{(i)}})= \int_{\R}x\P\left(\left. S^{(i)}_t \in \dd x \,\right| \mathcal{F}_u^{S^{(i)}} \right)=\int_{\R}x\P\left(\left. S^{(i)}_t \in \dd x \,\right| S^{(i)}_u \right).
\end{multline*}
For the opposite direction, if $\{\sib_t\}_{t\geq0}$ is strong Markov consistent, then since $\{\sib_t\}_{t\geq 0}$ is an $(\mathcal{F}_t^{\sib},\Q)$-martingale satisfying weak Markov consistency,
\begin{multline*}
\int_{\R}x\P\left(\left. S^{(i)}_t \in \dd x \,\right| \F^{\sib}_{u} \right)=\E( S^{(i)}_t \,|\, \F^{\sib}_{u}) =S^{(i)}_u \\
=\int_{\R}x\P\left(\left. S^{(i)}_t \in \dd x \,\right| S^{(i)}_u \right) = \int_{\R}x\P\left(\left. S^{(i)}_t \in \dd x \,\right| \mathcal{F}_u^{S^{(i)}} \right) = \E( S^{(i)}_t \,|\, \mathcal{F}_u^{S^{(i)}}).
\end{multline*} 
Hence, $\{S_t^{(i)}\}_{t\geq0}$ is also an $(\mathcal{F}_t^{S^{(i)}},\Q)$- martingale, and the statement follows.
\end{proof}

\section{Examples}
We shall now study two examples of GLPs in more detail: Brownian Liouville processes and Poisson Liouville processes.

\subsection{Brownian Liouville processes}
As a subclass of GLPs, let us consider what we call Brownian Liouville processes (BLPs). 
In Definition \ref{def:GLP}, we let $\{L_t\}_{0\leq t \leq u_n}$ be a Brownian random bridge given by
\begin{align}
\label{BRBCanonical}
L_t = \frac{t}{\sumel \mathbf{m}}L_{\sumel \mathbf{m}}+\s \left(W_t-\frac{t}{\sumel \mathbf{m}}W_{\sumel \mathbf{m}}\right),
\end{align}
where $\sigma>0$ and $\{W_t\}_{0\leq t \leq u_n}$ is a standard Brownian motion independent of the random variable $L_{\sumel \mathbf{m}}$. For a background of the anticipative orthogonal representation given in (\ref{BRBCanonical}) for a Brownian random bridge, we refer the reader to \cite{8} and \cite{10}. We also note that the Gaussian process  $\{W_t-\frac{t}{\sumel \mathbf{m}}W_{\sumel \mathbf{m}}\}_{0\leq t \leq \sumel \mathbf{m}}$ in (\ref{BRBCanonical}) is a Brownian bridge starting and ending at zero.
The following proposition is analogous to \cite[Proposition 3.10]{14} for Archimedean survival processes.
We denote the Hadamard (i.e.~element-wise) product of vectors $\xb, \yb \in \R^n$ by $\xb \circ \yb$.
We say $\{\beta_t\}_{0\leq t \leq 1}$ is a \emph{standard Brownian bridge} if (a) it is a Brownian bridge, (b) $\beta_0=\beta_1=0$, and (c) $\var(\beta_t)=(1-t)^2$.
\begin{prop} \label{BLPAnticipative}
If $\{\xib_t\}_{0\leq t \leq 1}$ is a BLP with the master process (\ref{BRBCanonical}), then it admits the independent Brownian bridge representation:
\begin{align}
\label{blpindependentrep}
\boldsymbol{\xi}_t = t\left(\frac{\mathbf{m}}{\sumel \mathbf{m}} R_1 + \sigma \mathbf{Z} \right) + \sigma \sqrt{\mathbf{m}} \circ \boldsymbol{\b}_t,
\end{align}
where $\sqrt{\mathbf{m}}=(\sqrt{m_1},\ldots,\sqrt{m_n})^\tp$, $\{\boldsymbol{\b}_t\}$ is a vector of independent standard Brownian bridges and the random vector $\mathbf{Z} = (Z_1,\ldots,Z_n)^{\tp}$ is multivariate Gaussian with
\begin{align*}
 \cov(Z_i, Z_j) = \delta_{ij}m_i - \frac{m_im_j}{\sumel \mathbf{m}}.
\end{align*}
\end{prop}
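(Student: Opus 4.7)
The plan is to substitute the anticipative representation (\ref{BRBCanonical}) into the coordinate-wise definition $\xi_t^{(i)} = L_{tm_i + u_{i-1}} - L_{u_{i-1}}$ from Definition \ref{def:GLP} and then separate the resulting expression into three independent pieces: an $R_1$-term, a ``centering'' Gaussian vector $\mathbf{Z}$, and bridge-type noise $\boldsymbol{\beta}_t$. Writing out $L_{tm_i+u_{i-1}}-L_{u_{i-1}}$ with (\ref{BRBCanonical}), the $L_{\sumel\mathbf{m}}$ contributions combine to $(tm_i/\sumel\mathbf{m})\,L_{\sumel\mathbf{m}}$, and the telescoping identity $R_1=\sum_j\xi_1^{(j)}=L_{\sumel\mathbf{m}}$ lets us replace this by $(tm_i/\sumel\mathbf{m})\,R_1$. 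The Brownian terms collapse to $\sigma[(W_{tm_i+u_{i-1}}-W_{u_{i-1}})-(tm_i/\sumel\mathbf{m})W_{\sumel\mathbf{m}}]$, which carries the remaining randomness.

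Next, I would introduce the non-overlapping block increments $\tilde{W}_t^{(i)}:=W_{tm_i+u_{i-1}}-W_{u_{i-1}}$ for $t\in[0,1]$. Because the intervals $(u_{i-1},u_i]$ are disjoint, $\{\tilde{W}^{(i)}\}_{i=1,\ldots,n}$ are independent Brownian motions with $\var(\tilde{W}_t^{(i)})=tm_i$, and $W_{\sumel\mathbf{m}}=\sum_j\tilde{W}_1^{(j)}$. For each $i$, apply the classical bridge/endpoint decomposition
\begin{equation*}
\tilde{W}_t^{(i)} = t\,\tilde{W}_1^{(i)} + \sqrt{m_i}\,\beta_t^{(i)},\qquad \beta_t^{(i)}:=\frac{\tilde{W}_t^{(i)}-t\,\tilde{W}_1^{(i)}}{\sqrt{m_i}},
\end{equation*}
where $\beta^{(i)}$ is a standard Brownian bridge independent of $\tilde{W}_1^{(i)}$, hence of every $\tilde{W}^{(j)}$. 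Substituting and regrouping yields
\begin{equation*}
\xi_t^{(i)} = \frac{tm_i}{\sumel\mathbf{m}}\,R_1 + t\sigma\!\left(\tilde{W}_1^{(i)} - \frac{m_i}{\sumel\mathbf{m}}\sum_j \tilde{W}_1^{(j)}\right) + \sigma\sqrt{m_i}\,\beta_t^{(i)},
\end{equation*}
so defining $Z_i:=\tilde{W}_1^{(i)}-(m_i/\sumel\mathbf{m})\sum_j\tilde{W}_1^{(j)}$ recovers exactly the vector identity (\ref{blpindependentrep}).

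The covariance claim reduces to a short bilinear computation on the independent Gaussians $\tilde{W}_1^{(j)}$ (with variances $m_j$): expanding $\cov(Z_i,Z_j)$ into four bilinear terms and simplifying gives $\delta_{ij}m_i - m_im_j/\sumel\mathbf{m}$, as asserted (and in particular $\sum_i Z_i=0$ almost surely, consistent with the constraint $\sum_i \xi_1^{(i)}=R_1$). Mutual independence of $R_1$, $\mathbf{Z}$, and $\boldsymbol{\beta}_t$ follows from three ingredients: (i) $L_{\sumel\mathbf{m}}\perp W$ by assumption in (\ref{BRBCanonical}); (ii) the block-wise independence of the $\tilde{W}^{(i)}$'s; and (iii) the classical decoupling of the bridge part $\beta^{(i)}$ from the endpoint $\tilde{W}_1^{(i)}$ in each block. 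There is no serious obstacle; the only bookkeeping step that requires care is tracking how the $W_{\sumel\mathbf{m}}$ correction redistributes each block's terminal innovation across all coordinates, which is precisely what creates the off-diagonal $-m_im_j/\sumel\mathbf{m}$ terms in $\cov(Z_i,Z_j)$ and guarantees $\sum_i Z_i = 0$.
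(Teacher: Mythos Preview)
Your proposal is correct and follows essentially the same route as the paper's proof: both substitute (\ref{BRBCanonical}) into the coordinate definition, split off the bridge part $\sqrt{m_i}\beta_t^{(i)}=W_{tm_i+u_{i-1}}-W_{u_{i-1}}-t(W_{u_i}-W_{u_{i-1}})$, and define $Z_i=W_{u_i}-W_{u_{i-1}}-(m_i/\sumel\mathbf{m})W_{\sumel\mathbf{m}}$, with independence verified via joint Gaussianity and zero covariance. Your block notation $\tilde W^{(i)}$ (so that $\tilde W_1^{(i)}=W_{u_i}-W_{u_{i-1}}$ and $\sum_j\tilde W_1^{(j)}=W_{\sumel\mathbf{m}}$) is merely a tidy relabelling of the paper's objects; one minor wording slip is that $\beta^{(i)}$ is independent of every endpoint $\tilde W_1^{(j)}$, not of the full process $\tilde W^{(i)}$ itself, but that is exactly what you need and use.
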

\begin{proof}
For the proof, we use $W_t$ and $W(t)$ interchangeably. We have
\begin{align*}
	\xi^{(i)}_t &= L(m_it-u_{i-1}) - L(u_{i-1}) \notag \\
	&=\frac{m_it}{\sumel \mathbf{m}}L(\sumel \mathbf{m}) + \sigma\left( W(m_it+u_{i-1}) - W(u_{i-1})-\frac{m_it}{\sumel \mathbf{m}}W(\sumel \mathbf{m}) \right) \notag \\
	&=\frac{m_it}{\sumel \mathbf{m}}R_1 + \sigma\sqrt{m_i}\b^{(i)}_t + \sigma t\left( W(u_{i}) - W(u_{i-1})-\frac{m_i}{\sumel \mathbf{m}}W(\sumel \mathbf{m}) \right),
\end{align*}
since $R_1=L_{\sumel \mathbf{m}}$, and where
\begin{align*}
\sqrt{m_i}\b^{(i)}_t = W(m_it + u_{i-1}) - W(u_{i-1}) - t(W(u_i) - W(u_{i-1})).
\end{align*}
So $\{\b^{(i)}_t\}_{0\leq t \leq 1}$ is a standard Brownian bridge, and is independent of $W(u_i) - W(u_{i-1})$ and $W(\sumel \mathbf{m})$. It is straightforward to verify the independence by noting that they are jointly Gaussian with nil covariation.
Thus we can write
\begin{align*}
\xi^{(i)}_t = t \left(\frac{m_i}{\sumel \mathbf{m}}R_1 + \sigma Z_i\right)+ \sigma\sqrt{m_i}\b^{(i)}_t,
\end{align*}
where $Z_i$ is given by
\begin{align}
\label{indgaussianrv}
Z_i =  W(u_{i}) - W(u_{i-1})-\frac{m_i}{\sumel \mathbf{m}}W(\sumel \mathbf{m}),
\end{align}
and $R_1$, $Z_i$ and $\{\b^{(i)}_t\}_{0\leq t \leq 1}$ are mutually independent.
Furthermore, $\{\b^{(1)}_t\}_{0\leq t \leq 1}$,\ldots, $\{\b^{(n)}_t\}_{0\leq t \leq 1}$ are mutually independent, since they are jointly Gaussian with nil covariation.
\end{proof}
Proposition \ref{BLPAnticipative} provides an anticipative orthogonal representation for BLPs, whereas (\ref{GLPcanonicalsemimartingale}) provides a non-anticipative semimartingale representation when $\{\xib_t\}_{0\leq t \leq 1}$ is a BLP.
\begin{rem}
\label{remforZ}
Note that $\sumel \mathbf{Z}=0$ from (\ref{indgaussianrv}), and so its covariance matrix is singular. Also, using (\ref{blpindependentrep}) from Proposition \ref{BLPAnticipative}, $\{R_t\}_{0\leq t \leq 1}$ admits the anticipative representation
\begin{align*}
R_t &= \sumel \xib_t = t\sumel\left(\frac{\mathbf{m}}{\sumel \mathbf{m}} R_1 + \sigma \mathbf{Z}\right) + \sigma\sumel(\sqrt{\mathbf{m}} \circ \boldsymbol{\b}_t) \notag \\
&= tR_1 + \sigma\sqrt{\sumel \mathbf{m}}\tilde{\b}_t,
\end{align*}
where $\{\tilde{\b}_t\}_{0\leq t \leq 1}$ is a standard Brownian bridge.
\end{rem}
\begin{prop} \label{measureSDEAll}
Let $\pi_0(\dd \textbf{x}) = \Q(\xib_{1} \in \dd \textbf{x})$ and $\pi_t(\dd \textbf{x}) = \Q(\xib_{1} \in \dd \textbf{x} \,|\, \mathcal{F}_t^{\xib})$. Also let $\{\textbf{B}_t\}_{0\leq t < 1}$ be a vector of standard $(\F_t^{\xib},\Q)$-Brownian motions. Then, if $\E(|\xib_{1}|)<\infty$, the multivariate measure-valued process $\{\pi_t\}_{0\leq t < 1}$ satisfies
\begin{align*}
\pi_t(\dd \textbf{x}) &= \pi_0(\dd \textbf{x}) + \int_{0}^t \pi_s(\dd \textbf{x}) \sigmab_s^\tp\left(\sigma\sqrt{\mathbf{m}}\circ\d \textbf{B}_s\right),
\end{align*}
for $0\leq t < 1$, where each coordinate of $\{\sigmab_t\}_{0\leq t < 1}$ is given by
\begin{align*}
\sigma_t^{(i)} &= \frac{x^{(i)}-\E(\xi_1^{(i)}\,|\, \xib_t)}{\sigma^2 m_i (1-t)}.
\end{align*}
\end{prop}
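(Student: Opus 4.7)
My plan is to fix a Borel set $B$, exploit the martingale property of $\pi_t(B) = \E[\1_{\{\xib_1\in B\}}\,|\,\F_t^{\xib}]$, and apply Itô's formula to its Markovian representation to read off the diffusion coefficient. By Proposition \ref{markovtransitionprob}, $\pi_t(B) = \psi(t,\xib_t)$ with $\psi(t,\yb) = \int_B q(t,\yb,\xb)\,\dd\xb$, where $q$ is the transition density (assuming $\nu$ has a density $p$; the general case is analogous). For a BLP, Proposition \ref{BLPAnticipative} expresses each coordinate in terms of an independent standard Brownian bridge $\b^{(i)}$ scaled by $\s\sqrt{m_i}$, so $\dd\langle\xi^{(i)},\xi^{(j)}\rangle_t = \delta_{ij}\s^2 m_i\,\dd t$; combined with the canonical decomposition in Proposition \ref{GLPcanonicalcoro}, this identifies the martingale part of $\xi_t^{(i)}$ as $\dd M_t^{(i)} = \s\sqrt{m_i}\,\dd B_t^{(i)}$ for the Brownian motions $\{B_t^{(i)}\}$ of the statement.

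Applying Itô's formula to $\psi(t,\xib_t)$ and using the martingale property to kill the drift yields $\dd\psi(t,\xib_t) = \sum_i \s\sqrt{m_i}\,\partial_{y_i}\psi(t,\xib_t)\,\dd B_t^{(i)}$, which at the level of measures becomes
\[
\dd\pi_t(\dd\xb) = \pi_t(\dd\xb)\sum_{i=1}^n \s\sqrt{m_i}\,\partial_{y_i}\log q(t,\xib_t,\xb)\,\dd B_t^{(i)}.
\]
Differentiating the explicit form of $q$ given after Proposition \ref{markovtransitionprob}, and using the Gaussian identity $\partial_y f_v(x-y) = (x-y)(\s^2 v)^{-1}f_v(x-y)$, gives
\[
\partial_{y_i}\log q(t,\yb,\xb) = -\frac{\Theta_t'(\sumel\yb)}{\Theta_t(\sumel\yb)} + \frac{x_i - y_i}{\s^2 m_i(1-t)}.
\]

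The crux of the proof is then the identity
\[
\E(\xi_1^{(i)}\,|\,\xib_t = \yb) - y_i = \s^2 m_i(1-t)\,\frac{\Theta_t'(\sumel\yb)}{\Theta_t(\sumel\yb)},
\]
which collapses the two score terms into $(x_i - \E(\xi_1^{(i)}\,|\,\xib_t))/(\s^2 m_i(1-t)) = \s_t^{(i)}$. I would establish it either by differentiating $\Theta_t(u) = \int f_{\sumel\mb(1-t)}(r-u)f_{\sumel\mb}(r)^{-1}\nu(\dd r)$ under the integral sign using the same Gaussian identity, or by combining the Brownian-bridge mean-sharing formula $\E(\xi_1^{(i)}\,|\,\xib_t,R_1 = r) = \xi_t^{(i)} + (m_i/\sumel\mb)(r - R_t)$ with the conditional law of $R_1$ from Proposition \ref{prop:R_GLP}. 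I expect this identification of $\Theta_t'/\Theta_t$ with the normalized residual $(\E(\xi_1^{(i)}|\xib_t) - \xi_t^{(i)})/(\s^2 m_i(1-t))$ to be the main obstacle; it is the specifically Gaussian step that reduces the apparently distinct score terms into the single clean form stated in the proposition. The interchange of differentiation and integration is justified by the moment hypothesis $\E(|\xib_1|)<\infty$, and the extension to a generating law $\nu$ without a density is obtained by replacing $p(r)\,\dd r$ with $\nu(\dd r)$ throughout.
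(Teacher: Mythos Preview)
Your approach is correct and arrives at the same SDE, but it is organised differently from the paper's proof.  The paper does not work with the transition density $q(t,\yb,\xb)$ from Proposition~\ref{markovtransitionprob} at all.  Instead it uses the anticipative representation of Proposition~\ref{BLPAnticipative}, which gives $\xi_t^{(i)}=t\,\xi_1^{(i)}+\sigma\sqrt{m_i}\,\beta_t^{(i)}$ with independent standard Brownian bridges $\beta^{(i)}$, and hence an explicit Bayesian likelihood ratio
\[
\pi_t(\dd\xb)=\frac{g\bigl((\xi_t^{(i)})_i,t;\dd\xb\bigr)}{G\bigl((\xi_t^{(i)})_i,t\bigr)},\qquad
g=\exp\!\Bigl\{\textstyle\sum_i\frac{x^{(i)}\xi_t^{(i)}-t(x^{(i)})^2/2}{\sigma^2 m_i(1-t)}\Bigr\}\pi_0(\dd\xb),\quad G=\int g.
\]
It\^o's formula is applied separately to $g$ and $G$ and then to the ratio; the conditional expectation $\E(\xi_1^{(i)}\mid\xib_t)$ appears automatically as the $\xb$-average under $\pi_t$ when differentiating $G$, so no auxiliary identity for $\Theta_t'/\Theta_t$ is ever needed.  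The Brownian motions $B^{(i)}$ are identified only at the end, via L\'evy's characterisation, from the residual $d\xi_t^{(i)}-(1-t)^{-1}(\E(\xi_1^{(i)}\mid\xib_t)-\xi_t^{(i)})\,dt$.

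Your route---martingale property to kill the drift, then the log-score of $q$, then the Gaussian identity $\Theta_t'(\sumel\yb)/\Theta_t(\sumel\yb)=(\E(R_1\mid\xib_t)-R_t)/(\sigma^2\sumel\mb(1-t))$ combined with the Liouville mean-sharing formula---is a legitimate alternative.  It is more modular and makes the specifically Gaussian step (your ``main obstacle'') very explicit, whereas the paper's ratio computation hides that step inside the differentiation of the normaliser $G$.  The paper's version is shorter and avoids the detour through $\Theta_t$, but your argument would transplant more readily to other filtering settings where one starts from a Markov transition kernel rather than an anticipative bridge decomposition.
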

\begin{proof}
See Appendix.
\end{proof}

\begin{rem}
Note that $\sumel\tilde{\textbf{B}}_t=\sumel(\sigma \sqrt{\mathbf{m}}\circ\textbf{B}_t)$ gives the non-anticipative semimartingale representation for $\{R_t\}_{0\leq t < 1}$, which is
\begin{align*}
R_t = \int_{0}^{t}\frac{\E( R_{1} \,|\, \xib_{s}) - R_{s}}{1-s}\dd s + \sigma\sum_{i=1}^n \sqrt{m_i}B_t^{(i)},
\end{align*}
which provides the explicit example of the $(\mathcal{F}_t^{\xib},\Q)$-martingale in (\ref{GLPsumcanonical}). 
\end{rem}

\subsection{Poisson Liouville processes}
Our second example are Poisson Liouville processes (PLPs), which are counting processes.
Accordingly, in Definition \ref{def:GLP}, we let $\{L_t\}_{0\leq t \leq u_n}$ be a Poisson random bridge with $\P(L_{u_n}=i)=\nu(\{i\})$, for $i\in\N_0$.
\begin{prop}
\label{intensitynormprocess}
Let $\{\lambda_t^R\}_{0\leq t \leq 1}$ be the intensity process of the $L_1$-norm process $\{R_t\}_{0\leq t \leq 1}$.
If $\E(R_1)<\infty$, then
\begin{align}
\label{plpnormintensity}
\lambda^R_t = \frac{\E(R_1 \,|\, \xib_{t}) - R_t}{1-t}, \qquad \text{for $0\leq t < 1$}.
\end{align}
\end{prop}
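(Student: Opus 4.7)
The strategy is to extract the intensity directly from the canonical semimartingale decomposition of $R$ given in Proposition \ref{canonicalrep}, using uniqueness of the Doob--Meyer compensator of a counting process.

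First I would verify that $\{R_t\}_{0\leq t \leq 1}$ is a genuine unit-jump counting process adapted to $\{\F^\xib_t\}$. Because the master Poisson random bridge $\{L_t\}$ has a.s.\ distinct, unit-sized jumps, each coordinate $\xi^{(i)}_t = L_{m_i t + u_{i-1}} - L_{u_{i-1}}$ makes unit jumps at rescaled GLP times $(\tau - u_{i-1})/m_i$, as $\tau$ ranges over the jumps of $L$ in $[u_{i-1},u_i]$. Absolute continuity of the joint distribution of the jump times of $L$ (given $L_{u_n}$, they are i.i.d.\ uniform on $[0,u_n]$) makes the set on which any two of these rescaled times coincide Lebesgue-null. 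Hence $R_t = \sumel \xib_t$ is $\F^\xib$-adapted, non-negative, non-decreasing, integer valued, and has only unit jumps almost surely.

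Next, the hypothesis $\E(R_1)<\infty$ implies $\E(|R_t|)<\infty$ for all $t\in[0,1]$, since $R$ is non-negative and non-decreasing in $t$, so $R_t\leq R_1$. Proposition \ref{canonicalrep}, together with the Markov property of $\xib$ from Proposition \ref{markovtransitionprob}, then gives
\begin{equation*}
R_t = A_t + M_t, \qquad A_t = \int_0^t \frac{\E(R_1\,|\,\xib_s)-R_s}{1-s}\,\dd s,
\end{equation*}
with $\{M_t\}_{0\leq t<1}$ an $(\F^\xib_t,\Q)$-martingale. Because $R$ is non-decreasing, $\E(R_1\,|\,\xib_s)\geq \E(R_s\,|\,\xib_s)=R_s$, so the integrand is non-negative; and $\{A_t\}$ is continuous, non-decreasing, and $\F^\xib$-adapted, hence $\F^\xib$-predictable.

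Finally, by uniqueness of the Doob--Meyer decomposition of the counting process $\{R_t\}$ in the filtration $\{\F^\xib_t\}$, $\{A_t\}$ must coincide with the compensator $\int_0^t \lambda^R_s\,\dd s$. Since $A$ is absolutely continuous in $t$, differentiation yields (\ref{plpnormintensity}). The main obstacle is the careful verification that $R$ is a unit-jump counting process, i.e.\ that the rescaled jump times arising from the $n$ disjoint master-time intervals are almost surely all distinct on $[0,1]$; once this is in hand, the result is an immediate consequence of Proposition \ref{canonicalrep} and uniqueness of the compensator.
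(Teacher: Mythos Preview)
Your proof is correct but takes a different route from the paper. The paper works directly from the limit definition of intensity for a counting process, $\lambda^R_t=\lim_{h\to 0}\E(R_{t+h}-R_t\,|\,\F_t^{\xib})/h$, and evaluates this limit by substituting the explicit conditional expectation formula \eqref{conditionalexpsglp}; the computation is a few lines of algebra. You instead invoke the full canonical decomposition of Proposition~\ref{canonicalrep} and then identify its finite-variation part as the predictable compensator via the uniqueness clause of the Doob--Meyer theorem. Your argument is more structural and is arguably cleaner about what ``intensity'' means (you get the compensator, not just a pointwise limit), and your verification that $R$ has only unit jumps is a point the paper simply asserts. On the other hand, the paper's computation is self-contained and avoids the class-(D) and predictability checks you need for Doob--Meyer; it also feeds into the Remark following the proposition, where the authors observe that in the PLP case this intensity calculation gives an \emph{alternative} proof of Proposition~\ref{canonicalrep}. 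Your approach runs the implication in the opposite direction, deriving the intensity from Proposition~\ref{canonicalrep}, which is logically fine since that proposition is proved independently, but it means you do not recover this ``alternative proof'' interpretation.
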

\begin{proof}
Since $\{R_t\}_{0\leq t \leq 1}$ is a counting process, we have $\lambda^R_t = \lim_{h\rightarrow 0}\E(R_{t+h}  - R_{t} \,|\, \F_t^{\xib})/h$. Since $\{R_t\}_{0\leq t \leq 1}$ is a Markov process with respect to $\{\F_t^{\xib}\}_{0\leq t \leq 1}$, using (\ref{conditionalexpsglp}), we have
\begin{align*}
\lambda^R_t &= \lim_{h\rightarrow 0} \left(\frac{\E(R_{t+h} \,|\, \xib_t)}{h} -\frac{R_t}{h}\right) \notag \\
&= \lim_{h\rightarrow 0} \left(\frac{1-t-h}{(1-t)h}\sumel \xib_t + \frac{h\E(R_{1} \,|\, \xib_t)}{(1-t)h} -\frac{R_t}{h}\right) \notag\\
&= \frac{\E(R_{1} \,|\, \xib_t)}{(1-t)} + \lim_{h\rightarrow 0} \left(R_t\left(\frac{1-t-h}{(1-t)h} -\frac{1}{h}\right)\right)  \notag \\
&= \frac{\E(R_{1} \,|\, \xib_t)}{1-t} - \lim_{h\rightarrow 0} \left(R_t\frac{h}{(1-t)h}\right),
\end{align*}
which yields the result.
\end{proof}
\begin{rem}
When $\{\xib_t\}_{0\leq t \leq 1}$ is a PLP, Proposition \ref{intensitynormprocess} provides an alternative proof for Proposition \ref{canonicalrep}, since $\{R_t\}_{0\leq t \leq 1}$ is a counting process. 
\end{rem}
\begin{prop}
\label{intensitymargprocess}
Let $\{\lambda_t^{(i)}\}_{0\leq t \leq 1}$ be the intensity process of the coordinate $\{\xi^{(i)}_t\}_{0\leq t \leq 1}$. If $\E(R_1)<\infty$, then
\begin{align*}
\lambda^{(i)}_t = \frac{m_i}{\sumel \mathbf{m}}\lambda^R_t, \qquad \text{for $0\leq t < 1$.}
\end{align*}
\end{prop}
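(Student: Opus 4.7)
The plan is to read off $\lambda_t^{(i)}$ from the $\F_t^{\xib}$-semimartingale decomposition of $\xi^{(i)}$ and then use a multinomial-splitting argument to reduce the resulting expression to $(m_i/\sumel\mathbf{m})\lambda_t^R$. Each coordinate $\{\xi^{(i)}_t\}_{0\leq t \leq 1}$, being a subprocess of the Poisson random bridge master, is itself a counting process, so its $\F_t^{\xib}$-compensator is of the form $\int_0^t \lambda_s^{(i)}\dd s$. On the other hand, Proposition \ref{GLPcanonicalcoro} (whose integrability hypothesis follows from $0\leq \xi^{(i)}_t\leq R_1$ together with $\E(R_1)<\infty$) yields the drift $\int_0^t (\E(\xi^{(i)}_1 \,|\, \F^{\xib}_s) - \xi^{(i)}_s)/(1-s)\dd s$, so by uniqueness of the compensator and the Markov property,
\[
\lambda_t^{(i)} = \frac{\E(\xi^{(i)}_1 \,|\, \xib_t) - \xi^{(i)}_t}{1-t}.
\]

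The key step is then to show
\[
\E(\xi^{(i)}_1 - \xi^{(i)}_t \,|\, \xib_t) = \frac{m_i}{\sumel\mathbf{m}}\bigl(\E(R_1 \,|\, \xib_t) - R_t\bigr).
\]
Reading the transition law (\ref{eq:GLP2}) in its discrete pmf form for the Poisson setting and substituting $f_a(k)=\e^{-a}a^k/k!$, the conditional law of the increment $\xib_1-\xib_t$ given $\xib_t=\xb$ is proportional to $\prod_{j=1}^n ((1-t)m_j)^{y_j-x_j}/(y_j-x_j)!$ multiplied by a factor depending only on $\sumel\yb$. Conditioning further on $R_1$ fixes the sum $\sum_j (y_j-x_j) = R_1 - R_t$, and the resulting distribution is multinomial with $R_1-R_t$ trials and cell probabilities $m_j/\sumel\mathbf{m}$ (the $(1-t)$ factors cancel in normalisation). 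Hence $\E(\xi^{(i)}_1 - \xi^{(i)}_t \,|\, \xib_t, R_1) = (m_i/\sumel\mathbf{m})(R_1-R_t)$, and the tower property yields the displayed identity.

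Substituting back and invoking Proposition \ref{intensitynormprocess} to identify $(\E(R_1 \,|\, \xib_t) - R_t)/(1-t) = \lambda_t^R$ gives $\lambda_t^{(i)} = (m_i/\sumel\mathbf{m})\lambda_t^R$. The main technical point is the multinomial-splitting step: once the GLP transition law is interpreted in its discrete pmf form (as the paper notes is available for discrete LRBs), the explicit Poisson pmf makes the factorisation $((1-t)m_j)^{y_j-x_j}=(1-t)^{y_j-x_j}m_j^{y_j-x_j}$ transparent, the $(1-t)$ dependence drops out of the conditional probabilities, and the proportionality structure between coordinates comes out with the advertised weights $m_j/\sumel\mathbf{m}$.
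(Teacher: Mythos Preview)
Your proof is correct, and it takes a somewhat different route from the paper's. Both arguments hinge on the identity $\E(\xi^{(i)}_1-\xi^{(i)}_t\,|\,\xib_t)=\tfrac{m_i}{\sumel\mathbf{m}}(\E(R_1\,|\,\xib_t)-R_t)$, but they reach it and use it differently. The paper works at general times $s<t$, invokes Remark~\ref{condincrementdistgenlaw} to recognise the conditional increment as generalised Liouville, and then cites the moment formula for Liouville distributions from \cite{2,6} to obtain $\E(\xi^{(i)}_t\,|\,\xib_s)=\tfrac{m_i}{\sumel\mathbf{m}}\mu_{st}+\xi^{(i)}_s$; it then computes the intensity as the limit $\lim_{h\to 0}\E(\xi^{(i)}_{t+h}-\xi^{(i)}_t\,|\,\F_t^{\xib})/h$. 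You instead read the intensity off directly from the compensator in Proposition~\ref{GLPcanonicalcoro} (avoiding the limit calculation), and you replace the external Liouville moment citation by an explicit multinomial-splitting computation tailored to the Poisson pmf. Your argument is more self-contained and elementary for this particular example, while the paper's approach emphasises that the proportionality $m_i/\sumel\mathbf{m}$ is a generic feature of the Liouville structure rather than something special to Poisson.
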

\begin{proof}
Fix $0\leq s < t < 1$. From Remark \ref{condincrementdistgenlaw}, we know that given $\xib_s$ the increment $\xib_t - \xib_s$ has a generalised multivariate Liouville distribution with generating law $\nu^*(\{i\})=\nu_{st}(\{i+R_s\})$, for $i\in\N_0$ and parameter vector $\mathbf{m}(t-s)$.
We define
\begin{align*}
\mu_{st} &= \sum_{i=0}^\infty i\, \nu^*(\{i\})  = \sum_{i=R_s}^{\infty} i \, \nu_{st}(\{i\}) - R_s = \E(R_t \,|\, \xib_{s}) - R_s \notag \\
&=  \frac{t-s}{1-s}\left(\E(R_1 \,|\, R_{s}) - R_s\right),
\end{align*}
where the last equality comes from (\ref{conditionalexpsglp}). Then from \cite{2} (Theorem 6.3) and \cite{6}, we have
\begin{align*}
\E(\xi^{(i)}_t \,|\, \xib_{s}) = \frac{m_i}{\sumel \mathbf{m}}\mu_{st} + \xi^{(i)}_s.
\end{align*}
Since $\{\xi^{(i)}_t\}_{0\leq t \leq 1}$ is a counting process, we have $\lambda^{(i)}_t = \lim_{h\rightarrow 0}\E[\xi^{(i)}_{t+h}  - \xi^{(i)}_{t} \,|\, \F_t^{\xib}]/h$. Thus,
\begin{align*}
\lambda^{(i)}_t &= \frac{m_i}{\sumel \mathbf{m}}\lim_{h\rightarrow 0}\frac{\mu_{t,t+h}}{h} \notag \\
& = \frac{m_i}{\sumel \mathbf{m}}\frac{\E(R_1 \,|\, R_{t}) - R_t}{1-t}.
\end{align*}
The result then follows from (\ref{plpnormintensity}).
\end{proof}
\begin{rem}
Note that $\lambda^{R}_t = \sum_i\lambda^{(i)}_t$.
\end{rem}

If we let $\{P_t\}_{0\leq t \leq 1}$ denote a Poisson process, and define $\boldsymbol{\D}$ by $\D_i = P_{t_i}-P_{t_{i-1}}$ for some partition $0=t_0<t_1<\cdots<t_n$, we have
\begin{align*}
\Q(\boldsymbol{\D}=\mathbf{x}\,|\, P_{t_n}=k) = \left\{\begin{aligned}
													& k! \prod_{i=1}^n \frac{p_i^{x_i}}{x_i!}, &&\text{for $\xb\in\N_0^n$ with $\sumel \mathbf{x}=k$,}
												\\ &0, &&\text{otherwise,}
												\end{aligned}\right.
\end{align*}
where $p_i=(t_i-t_{i-1})/t_n$.
In other words, given $P_{t_n}$, $\boldsymbol{\D}$ has a multinomial distribution.
Let $\{\xib_t\}_{0\leq t \leq 1}$ be a Poisson Liouville process with generating law $\nu(\{k\})=A(k)$, $k\in\N_0$.
Then
\begin{align*}
\P(\xib_1=\mathbf{x}) = A(\sumel \mathbf{x}) (\sumel \mathbf{x})! \prod_{i=1}^n \frac{p_i^{x_i}}{x_i!}.
\end{align*}
Write $G_\nu$ for the probability generating function of $\nu$:
\begin{align*}
G_{\nu}(z)=\sum_{k=0}^{\infty}z^kA(k).
\end{align*}
Let $T^{(i)}$ be the time of the first jump of the $i$th marginal process.
If $\xi^{(i)}_1<1$, then we set $T^{(i)}=\infty$.
\begin{prop} \label{poiossonmarg}
The random times $\{T^{(i)}; i=1,\ldots,n\}$ satisfy the following:
\begin{align*}
	\P(T^{(i)}>s) &= G_{\nu}(1-s p_i), \notag \\
	\P(T^{(i)}=\infty) &= G_{\nu}(1-p_i), \notag \\
	\P(T^{(i)}> s_i; i=1,\ldots,n)&=G_{\nu}\left( 1-\sum_{i=1}^n p_i s_i \right), \notag \\
	\P(T^{(i)}=\infty; i=1,\ldots,n) &= A(0),
\end{align*}
for $s\in[0,1]$ and $\mathbf{s}\in [0,1]^n$.
\end{prop}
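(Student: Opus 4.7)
The plan is to condition on the terminal value $\xib_1$ and exploit the fact, established earlier, that conditionally on $\xib_1$ the coordinates of $\{\xib_t\}_{0\leq t \leq 1}$ are independent \levy bridges. When the master process is a Poisson random bridge, each coordinate given $\xi_1^{(i)}=x_i$ is a Poisson bridge on $[0,1]$ from $0$ to $x_i$, and the four identities will follow by averaging a simple conditional probability against the terminal distribution of $\xib_1$.

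First I would invoke the classical conditional-uniformity property: the jump times of a Poisson bridge from $0$ to $x_i$ on $[0,1]$ are distributed as the order statistics of $x_i$ i.i.d.\ Uniform$[0,1]$ random variables. Hence, adopting the convention $(1-s)^0=1$ (which absorbs the case $\xi_1^{(i)}=0$, i.e.\ $T^{(i)}=\infty$),
\[
\P(T^{(i)}>s\,|\,\xi_1^{(i)}=x_i)=(1-s)^{x_i}, \qquad s\in[0,1],\ x_i\in\N_0,
\]
and conditional independence of the coordinates given $\xib_1$ yields
\[
\P(T^{(i)}>s_i,\ i=1,\ldots,n \,|\, \xib_1=\mathbf{x})=\prod_{i=1}^n (1-s_i)^{x_i}.
\]

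Next I would average against the terminal law $\P(\xib_1=\mathbf{x})=A(\sumel \mathbf{x})(\sumel \mathbf{x})!\prod_i p_i^{x_i}/x_i!$ derived just above the proposition. Grouping terms by $k=\sumel\mathbf{x}$ and applying the multinomial theorem to the inner sum,
\[
\P(T^{(i)}>s_i,\ i=1,\ldots,n)=\sum_{k=0}^\infty A(k)\Bigl(\sum_{i=1}^n p_i(1-s_i)\Bigr)^k=\sum_{k=0}^\infty A(k)\Bigl(1-\sum_{i=1}^n p_i s_i\Bigr)^k=G_\nu\Bigl(1-\sum_{i=1}^n p_i s_i\Bigr),
\]
using $\sum_i p_i=1$ and the definition of $G_\nu$. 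This is the joint identity, from which the rest are immediate: taking $s_i=s$ and $s_j=0$ for $j\neq i$ gives the single-marginal survival function $G_\nu(1-sp_i)$; letting $s\uparrow 1$, by continuity of $G_\nu$ on $[0,1]$, gives $\P(T^{(i)}=\infty)=G_\nu(1-p_i)$; and letting all $s_i\uparrow 1$ gives $G_\nu(0)=A(0)$, consistent with the direct observation that $T^{(i)}=\infty$ for every $i$ is equivalent to $\xib_1=\mathbf{0}$. No step is serious; the only nontrivial ingredient is the identification of the conditional coordinate laws as Poisson bridges and the use of the uniform order statistics representation for their jump times, after which everything reduces to a multinomial-theorem calculation.
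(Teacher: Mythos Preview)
Your argument is correct and follows essentially the same route as the paper: both condition on the terminal value, use that the coordinates are independent Poisson bridges, and reduce the computation to a multinomial identity inside the probability generating function $G_\nu$. The only cosmetic differences are that the paper handles the single marginal by conditioning directly on $\sumel\xib_1$ (using $\P(\xi^{(i)}_s=0\mid \sumel\xib_1=k)=(1-sp_i)^k$) rather than deriving it from the joint formula, and it evaluates $\E\bigl[\prod_i(1-s_i)^{\xi_1^{(i)}}\bigr]$ by a further conditioning on $\sumel\xib_1$ instead of summing the explicit terminal density; the content is the same.
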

\begin{proof}
See Appendix.
\end{proof}

$G_\nu$ is increasing (and invertible) on $[0,1]$.
Write $\psi(x)=G_\nu(1-x)$, and note that $\psi$ is invertible on $[0,1]$.
If $u_i\in[0,\psi(p_i)]$, then we have
\begin{align*}
\P\left(T^{(i)}> \frac{\psi^{-1}(u_i)}{p_i}\right) = u_i.
\end{align*}
It follows that the conditioned random variable $\psi(p_i T^{(i)}) \,|\, \{T^{(i)} < 1\}$ is uniformly distributed.
Furthermore
\begin{align*}
\P\left(T^{(i)}> \frac{\psi^{-1}(u_i)}{p_i}; i=1,\ldots,n\right) = \psi\left(\sum_i \psi^{-1}(u_i)\right).
\end{align*}
The form of the joint survival function of $\mathbf{T}=\{T^{(i)}; i=1,\ldots,n\}$ resembles that of an Archimedean copula.
However, the fact that $\P(T^{(i)}\geq 1)>0$ means that it is not an Archimedean copula.

\section{Conclusion}
We introduced generalised Liouville processes: a broad and tractable class of multivariate stochastic processes.
The class of GLPs generalises some processes that have already been studied.
We detailed various properties of GLPs and provided some new examples.

\section*{Acknowledgements}
The authors are grateful to an anonymous referee whose careful reading and comments led to significant improvements in this paper.

\appendix

\section{Proofs}

\subsection{Proposition \ref{eq:finitedimensionalGLPincrementcharacterise}}
\begin{proof}
Since $\nu$ has no continuous singular part,
$\nu(\dd z)=\sum^{\infty}_{j=-\infty}c_{i}\delta_{z_{i}}(z)\d z+p(z)\d z$,
where $c_{i}\in\mathbb{R}_+$ is a point mass of $\nu$ located at $z_{i}\in\mathbb{R}$, and $p:\mathbb{R}\rightarrow\mathbb{R}_{+}$ is the density of the continuous part of $\nu$.
Then from (\ref{eq:LRBfinitedimensionaldistributionglp}), the joint density of an LRB $\{L_t\}_{0\leq t \leq u_n}$ is given by
\begin{multline*}
	\Q(L_{t_1} \in \dd x_1, \ldots, L_{t_k} \in \dd x_k, L_{u_n}\in \dd x_n)
	  \\= \prod_{i=1}^n[f_{t_i-t_{i-1}}(x_i-x_{i-1}) \d x_i] \frac{\sum^{\infty}_{j=-\infty}c_{i}\delta_{z_{i}}(x_n)+p(x_n)}{f_{u_n}(x_n)},
\end{multline*}
where $x_0=0$, for all $k\in\N_+$, all partitions $0=t_0<t_1<\cdots<t_{n-1}<t_{n}=u_n$, all $x_{n}\in\R$, and all $(x_1,\ldots,x_k)^\tp=\mathbf{x} \in \R^k$. 
Let $\ab\in\mathbb{R}^{n}_{+}$ be the vector of time increments $\alpha_{i}=t_{i}-t_{i-1}$, and $\alpha=\sumel \ab=u_n$. 
The Jacobian of the transformation $y_{1}=x_{1}, y_{2}=x_{2}-x_{1}, \ldots y_{n}=x_{n}-x_{n-1}$ is unity, and it follows that
\begin{multline*}
	\Q(L_{t_1}-L_{t_0} \in \dd y_1, \ldots, L_{u_n}-L_{t_k}\in \dd y_n) 
	  \\= \prod_{i=1}^nf_{\alpha_{i}}(y_i)\d y_{i} \frac{\sum^{\infty}_{j=-\infty}c_{i}\delta_{z_{i}}(\sum^{n}_{i=1}y_{i})+p(\sum^{n}_{i=1}y_{i})}{f_\alpha(\sum^{n}_{i=1}y_{i})}.
\end{multline*}
From \cite{6}, we know that $(L_{t_1}-L_{t_0},\ldots,L_{t_k}-L_{t_{k-1}},L_{u_n}-L_{t_k})^{\tp}$
has a generalised multivariate Liouville distribution.
Fix $k_i\geq 1$ and the partitions $0=t_0^i<t_1^i<\cdots<t_{k_i}^i=1$, for $i=1,\ldots,n$. 
Define the non-overlapping increments $\{\D_{ij}\}$ by $\D_{ij}=\xi^{(i)}_{t^i_j}-\xi^{(i)}_{t^i_{j-1}}$,
for $j=1,\ldots,k_i$ and $i=1,\ldots,n$. 
The distribution of the $k_{1}\times \cdots \times k_{n}$-element vector
$\boldsymbol{\D}=(\D_{11},\ldots,\D_{1k_1},\ldots, \D_{n1},\ldots,\D_{nk_n})^\tp$
characterises the finite-dimensional distributions of the GLP $\{\xib_t\}_{0\leq t \leq 1}$. 
It follows from the Kolmogorov extension theorem that the distribution of $\boldsymbol{\D}$ characterises the law of $\{\xib_t\}_{0\leq t \leq 1}$. 
Note that $\boldsymbol{\D}$ contains non-overlapping increments of $\{L_t\}$ such that $\sumel \boldsymbol{\D}=L_{u_n}$.
Hence, $\boldsymbol{\D}$ has a generalised multivariate Liouville distribution.
\end{proof}

\subsection{Proposition \ref{markovtransitionprob}}
\begin{proof}
	We compute the transition probabilities of $\{\xib_t\}_{0\leq t \leq 1}$ directly.
	We begin by transitioning to $\xib_t$ for $t<1$.
	For all $k\geq 2$, all $0<t_1<\cdots<t_k<1$ and all $\xb_1,\ldots,\xb_{k} \in \R^n$, we have
	\begin{align}
	&\P(\xib_{t_k}\in \dd \xb_k \,|\, \xib_{t_1}=\xb_1,\ldots, \xib_{t_{k-1}}=\xb_{k-1}) \nonumber \\
	&\qquad= \frac{\P(\xib_{t_1}\in\dd \xb_1,\ldots, \xib_{t_{k}}\in\dd\xb_{k})}
	{\P(\xib_{t_1}\in\dd\xb_1,\ldots, \xib_{t_{k-1}}\in\dd\xb_{k-1})} \nonumber \\
	&\qquad= \frac{\int_{-\infty}^\infty \P(\xib_{t_1}\in \dd \xb_1,\ldots, \xib_{t_k}\in \dd\xb_k \,|\, R_1=z)\nu(\dd z)}
	{\int_{-\infty}^\infty \P(\xib_{t_1}\in\dd\xb_1,\ldots, \xib_{t_{k-1}}\in\dd\xb_{k-1} \,|\, R_1=z) \nu(\dd z)} \nonumber \\
	&\qquad= \frac{\int_{-\infty}^\infty \P(\xib_{t_1}-\xib_{t_0}\in -\xb_{0}+\dd \xb_1,\ldots, \xib_{t_k}-\xib_{t_{k-1}}\in -\xb_{k-1}+\dd\xb_k \,|\, R_1=z)\nu(\dd z)}
	{\int_{-\infty}^\infty \P(\xib_{t_1}-\xib_0\in -\xb_{0}+\dd \xb_1,\ldots, \xib_{t_{k-1}}-\xib_{t_{k-2}}\in -\xb_{k-2}+\dd\xb_{k-1} \,|\, R_1=z) \nu(\dd z)} \label{eq:inc} \\
	&\qquad= \frac{\int_{-\infty}^\infty \left\{ \prod_{i=1}^n\prod_{j=1}^k f_{m_i(t_j-t_{j-1})}(x^{(i)}_j-x^{(i)}_{j-1}) \d x^{(i)}_j \right\} f_{\sumel \mb(1-t_k)}(z-\sumel \xb_k) \frac{\nu(\dd z)}{f_{\sumel \mb}(z)}}
	{\int_{-\infty}^\infty \left\{ \prod_{i=1}^n\prod_{j=1}^{k-1} f_{m_i(t_j-t_{j-1})}(x^{(i)}_j-x^{(i)}_{j-1}) \d x^{(i)}_j \right\} f_{\sumel \mb(1-t_{k-1})}(z-\sumel \xb_k)\frac{\nu(\dd z)}{f_{\sumel \mb}(z)}} \label{eq:cyc} \\
	&\qquad= \frac{\int_{-\infty}^\infty \left\{ \prod_{i=1}^n f_{m_i(t_k-t_{k-1})}(x^{(i)}_k-x^{(i)}_{k-1}) \d x^{(i)}_k \right\} f_{\sumel m(1-t_k)}(z-\sumel \xb_k) \frac{\nu(\dd z)}{f_{\sumel \mb}(z)}}
	{\int_{-\infty}^\infty f_{\sumel m(1-t_{k-1})}(z-\sumel \xb_{k-1})\frac{\nu(\dd z)}{f_{\sumel \mb}(z)}} \nonumber \\
	&\qquad= \frac{\Theta_{t_k}(\sumel \xb_{k})}{\Theta_{t_{k-1}}(\sumel \xb_{k-1})} \prod_{i=1}^n f_{m_i(t_k-t_{k-1})}(x^{(i)}_k-x^{(i)}_{k-1}) \d x^{(i)}_k, \nonumber
	\end{align}
	where $t_0=0$, $\xb_0=\mathbf{0}$ and $x^{(i)}_j$ is the $i$th coordinate of $\xb_j$.
	We provide some remarks on the step (\ref{eq:inc}) to (\ref{eq:cyc}).
	Note that in (\ref{eq:inc}) all the increments of type $\xib_t-\xib_s$ are vectors of non-overlapping increments of the master LRB $\{L_t\}_{0\leq t \leq \sumel \mb}$.
	Given $R_1=L_{\sumel \mb}$, $\{L_t\}_{0\leq t \leq \sumel \mb}$ is a \levy bridge, and so its law is invariant to a reordering of its non-overlapping increments.
	This is a direct result of the so-called cyclical exchangeability property of \levy bridges (see, for example, \cite{18}).
	The integrands in (\ref{eq:cyc}) can then be recognised as \levy bridge transition probabilities.
	
	We now consider transitioning to $\xib_1$.
	For all $k\geq 1$, all $0<t_1<\cdots<t_k<1$, all $\xb_1,\ldots,\xb_k \in \R^n$, all $y_1,\ldots,y_{k-1}\in\R$ and all $B\in\mathcal{B}(\R)$, we have
	\begin{align}
	&\P(\xi^{(1)}_1\in \dd y_1,\ldots,\xi^{(n-1)}_1\in \dd y_{n-1}, \xi^{(n)}_1\in B\,|\,  \xib_{t_1}=\xb_1,\ldots, \xib_{t_{k}}=\xb_{k}) \nonumber \\
	&\qquad=\frac{\P(\xi^{(1)}_1\in \dd y_1,\ldots,\xi^{(n-1)}_1\in \dd y_{n-1}, \xi^{(n)}_1\in B, \xib_{t_1}\in\dd\xb_1,\ldots, \xib_{t_{k}}\in\dd\xb_{k})}
	{\P(\xib_{t_1}\in\dd\xb_1,\ldots, \xib_{t_{k-1}}\in\dd\xb_{k})} \nonumber \\
	&\qquad=\frac{\P(\xi^{(1)}_1\in \dd y_1,\ldots,\xi^{(n-1)}_1\in \dd y_{n-1}, R_1\in B+\sum_{i=1}^{n-1} y_i, \xib_{t_1}\in\dd\xb_1,\ldots, \xib_{t_{k}}\in\dd\xb_{k})}
	{\P(\xib_{t_1}\in\dd\xb_1,\ldots, \xib_{t_{k-1}}\in\dd\xb_{k})} \nonumber \\
	&\qquad=\frac{\int_{z\in B+\sum_{i=1}^{n-1} y_i}\P(\xi^{(1)}_1\in \dd y_1,\ldots,\xi^{(n-1)}_1\in \dd y_{n-1}, \xib_{t_1}\in\dd\xb_1,\ldots, \xib_{t_{k}}\in\dd\xb_{k} \,|\,R_1=z)\nu(\dd z)}
	{\int_{-\infty}^{\infty}\P(\xib_{t_1}\in\dd\xb_1,\ldots, \xib_{t_{k-1}}\in\dd\xb_{k}\,|\,R_1=z)\nu(\dd z)} \nonumber \\
	&\qquad=\frac{\int_{z\in B+\sum_{i=1}^{n-1} y_i}\left\{\prod_{i=1}^{n-1}f_{m_i(1-t_k)}(y_i-x_k^{(i)})\d y_i \right\}f_{m_n(1-t_k)}\left(z-x_k^{(n)}-\sum_{i=1}^{n-1}y_i\right)\frac{\nu(\dd z)}{f_{\sumel \mb}(z)}}
	{\int_{-\infty}^\infty f_{\sumel \mb(1-t_{k-1})}(z-\sumel \xb_{k-1})\frac{\nu(\dd z)}{f_{\sumel \mb}(z)}} \label{eq:cyc2} \\
	&\qquad= \frac{\theta_{\tau(t_k)}\left(B+\sum_{i=1}^{n-1} y_i; x_k^{(n)}+\sum_{i=1}^{n-1}y_i  \right)}{\Theta_{t_{k-1}}(\sumel \xb_{k-1})}\prod_{i=1}^{n-1}f_{m_i(1-t_k)}(y_i-x_k^{(i)})\d y_i, \nonumber
	\end{align}
	where again $t_0=0$ and (\ref{eq:cyc2}) follows from similar arguments to (\ref{eq:cyc}).
\end{proof}

\subsection{Corollary \ref{radonnikodymglp}}
\begin{proof}
The process $\{R_t\}_{0\leq t \leq 1}$ is a \levy process under $\widetilde{\Q}$, where $\widetilde{\mathbb{P}}(R_{t}\in \d x)=f_{t(\sumel\mb)}(x)\d x$. 
To show $\E^{\widetilde{\Q}}\left(|\Theta_t(R_t) |\right)< \infty$,
use the Chapman-Kolmogorov convolution and the non-negativity of $f$:
\begin{align*}
\int_{\R}\left(\int_{\R} \left|\frac{f_{\sumel \mathbf{m}(1-t)}(z-r)f_{t(\sumel \mathbf{m})}(r)}{f_{\sumel \mathbf{m}}(z)}\right|\dd r\right) \nu(\dd z)
&=\int_{\R}\int_{\R} f_{\sumel \mathbf{m}(1-t)}(z-r)f_{t(\sumel \mathbf{m})}(r)\dd r \frac{\nu(\dd z)}{f_{\sumel \mathbf{m}}(z)} \nonumber \\
&=\int_{\R}\frac{f_{\sumel \mathbf{m}}(z)}{f_{\sumel \mathbf{m}}(z)}\nu(\dd z)\nonumber \\
&=\nu(\R) = 1.
\end{align*}
Since $\R$ is a $\sigma$-finite measure space (with respect to Lebesgue measure), and $f$ is measurable, we can use Fubini's theorem and write
\begin{align*}
\int_{\R}\left(\int_{\R} \left|\frac{f_{\sumel \mathbf{m}(1-t)}(z-r)f_{t(\sumel \mathbf{m})}(r)}{f_{\sumel \mathbf{m}}(z)}\right|\dd r\right) \nu(\dd z)
&=\int_{\R}\left(\int_{\R} \left|\frac{f_{\sumel \mathbf{m}(1-t)}(z-r)f_{t(\sumel \mathbf{m})}(r)}{f_{\sumel \mathbf{m}}(z)}\right|\nu(\dd z)\right) \dd r \nonumber \\
&=\E^{\widetilde{\Q}}\left(|\Theta_t(R_t) |\right).
\end{align*}
Also, since $\{\Theta_t(R_t)\}_{0\leq t<1}$ is harmonic, $\{\Theta_t(R_t)\}_{0\leq t<1}$ is an $(\F_t^{\xib},\widetilde{\Q})$-martingale. 
Explicitly, we have
\begin{align*}
	\E^{\widetilde{\Q}}\left(\Theta_t(R_t) \left|\, \F_s^{\xib} \right.\right)
			&=\E^{\widetilde{\Q}}\left(\left.\int_{-\infty}^{\infty}\frac{f_{\sumel \mathbf{m}(1-t)}(z-R_s-(R_t-R_s))}{f_{\sumel \mathbf{m}}(z)}\, \nu(\dd z)\,\right|\xib_{s} \right) \nonumber
			\\	&=\int_{-\infty}^{\infty}\int_{-\infty}^{\infty}f_{\sumel \mathbf{m}(1-t)}(z-R_s-y)f_{\sumel \mathbf{m}(t-s)}(y)\d y \, \frac{\nu(\dd z)}{f_{\sumel \mathbf{m}}(z)} \nonumber
	\\	&=\int_{-\infty}^{\infty}\frac{f_{\sumel \mathbf{m}(1-s)}(z-R_s)}{f_{\sumel \mathbf{m}}(z)} \, \nu(\dd z) \nonumber
	\\ 	&=\Theta_s(R_s),
\end{align*}
for $0\leq s<t<1$. Since $\Theta_0(R_0)=1$ and $\Theta_t(R_t)>0$, $\{\Theta_t(R_t)\}_{0\leq t<1}$ is a Radon-Nikodym density process. 
We continue by verifying that under $\Q$ the transition law of $\{\xib_t\}_{0\leq t < 1}$ is that of a GLP with generating law $\nu$ and parameter vector $\mb$:
	\begin{align}		
		\Q\left(\xib_t \in \dd \mathbf{x} \,|\, \F_s^{\xib}\right)
				&=\E({1}_{\{\xib_t\in\dd\mathbf{x}\}} \,|\, \F_s^{\xib}) \nonumber
		\\ 	&=\frac{1}{\Theta_s(R_s)}\,\E^{\widetilde{\Q}}(\Theta_t(R_t) {1}_{\{\xib_t\in\dd\mathbf{x}\}} \,|\, \xib_s) \nonumber
		\\  &=\frac{\Theta_{t}(R_t)}{\Theta_{s}(R_{s})}\prod_{i=1}^n f_{(t-s)(m_i)}(x_i-\xi^{(i)}_s) \d x_i.
	\label{eq:MClevy}
	\end{align}
	Comparing equations (\ref{eq:MClevy}) and (\ref{eq:GLP2}) completes the proof.
\end{proof}

\subsection{Proposition \ref{prop:marginaltransition}}
\begin{proof}
Fix $0 \leq s < t < 1$. Then,
\begin{align} \label{eq:marginaltransitionprob}
\Q(\xi^{(i)}_t \in \d y_i | \xi^{(i)}_s=x_i)&= \frac{\int_{\R} \Q(\xi^{(i)}_t \in \d y_i, \xi^{(i)}_s\in \dd x_i | R_1=r) \Q(R_1\in \dd r)}{\int_{\R}\Q(\xi^{(i)}_s\in \dd x_i | R_1=r) \Q(R_1\in \dd r)}.
\end{align}
The numerator of (\ref{eq:marginaltransitionprob}) is
\begin{align}
\label{margnum}
\int_{\R} \Q(\xi^{(i)}_t \in \d y_i, \,&\xi^{(i)}_s\in \dd x_i | R_1=r) \Q(R_1\in \dd r) \notag \\
&= f_{m_is}(x_i)\dd x_i f_{(t-s)m_i}(y_i-x_i) \d y_i \int_{\R} \frac{ f_{\sumel \mathbf{m} -m_it}(r-y_i)}{f_{\sumel \mathbf{m}}(r)}\nu(\dd r),
\end{align}
and the denominator is
\begin{align}
\label{margdenom}
\int_{\R}\Q(\xi^{(i)}_s\in \dd x_i | R_1=r) \Q(R_1\in \dd r) = f_{m_is}(x_i)\dd x_i \int_{\R} \frac{f_{\sumel \mathbf{m} -m_is}(r-x_i)}{f_{\sumel \mathbf{m}}(r)}\nu(\dd r).
\end{align}
Dividing (\ref{margnum}) by (\ref{margdenom}) concludes the first part. 

For the second part, write $\xib_t^{\oslash i}$ for the vector $\xib_t$ excluding its $i$th coordinate. 
Using the Markov property of $\{\xib_t\}_{0\leq t \leq 1}$, we have 
\begin{align}
\Q\left(\xi^{(i)}_t \in \dd y_i \,|\, \F_s^{\xib} \right) = \frac{\Q\left(\xi^{(i)}_t \in \dd y_i,\xi^{(i)}_s \in \dd x_i, \xib_s^{\oslash i} \in \dd \mathbf{x} \right)}{\Q\left(\xi^{(i)}_s \in \dd x_i, \xib_s^{\oslash i} \in \dd \mathbf{x} \right)},
\label{eq:num_denom}
\end{align} 
The numerator of (\ref{eq:num_denom}) is given by
	\begin{multline}
		\label{eq:marginalGLPnum}
		\int_{\R} \Q\left(\left. \xi^{(i)}_t \in \dd y_i , \xi^{(i)}_s \in \dd x_i, \xib_s^{\oslash i} \in\dd \mathbf{x}^{\oslash i}  \,\right| R_1=r \right) \, \Q(R_1\in\dd r)=
		\\ \prod_{j=1}^n[f_{m_js}(x_j) \d x_j] f_{m_i(t-s)}(y_i-x_i) \d y_i  
		\\ \times \int_{\R} \frac{f_{\sumel \mathbf{m}(1-s) + m_i(t-s)}(r-\sum^{n}_{j=1} x_{j} + (y_i-x_i))}{f_{\sumel \mathbf{m}}(r)} \, \nu(\dd r),
	\end{multline}
and the denominator is given by
	\begin{equation}
		\label{eq:denomlevy}
		\Q\left(\xib_s\in\d\mathbf{x} \right) =
		 \prod_{i=1}^{n}[f_{m_is}(x_i)\d x_i] \int_{-\infty}^{\infty} \frac{f_{\sumel \mb(1-s)}(r-\sum^{n}_{i=1} x_{i})}{f_{\sumel \mb}(r)} \,\nu(\dd r).
		\end{equation}
	Equation (\ref{eq:denomlevy}) follows from the stationary increments property of LRBs and (\ref{eq:LRBfinitedimensionaldistributionglp}).	
 Dividing (\ref{eq:marginalGLPnum}) by (\ref{eq:denomlevy}) concludes the second part.
\end{proof}

\subsection{Proposition \ref{prop:R_GLP}}
\begin{proof}
	Since $\{\xib_t\}_{0\leq t \leq 1}$ is a Markov process with respect to $\{\F_t^{\xib}\}_{0\leq t \leq 1}$, $\{R_t\}_{0\leq t \leq 1}$ is a Markov process with respect to $\{\F_t^{\xib}\}_{0\leq t \leq 1}$.
	We first verify (\ref{rcondone}), the $\xib_s$-conditional law of $R_1$.
	For $s=0$, trivially the law of $R_1$ is $\nu$.
	For $0< s <1$, using (\ref{eq:denomlevy}), we have
	\begin{align*} 
		\Q(R_1\in \dd r\,|\, \xib_s=\mathbf{x}) &= \frac{\Q(\xib_s\in \d\mathbf{x} \,|\, R_1=r)\,\nu(\dd z)}{\Q\left(\xib_s\in\d\mathbf{x} \right)} \notag \\
		&= \frac{f_{\sumel \mathbf{m}(1-s)}(r-\sumel \xb) \, \frac{\nu(\dd r)}{f_{\sumel \mathbf{m}}(r)}}
			{\int_{\R}f_{\sumel \mathbf{m}(1-s)}(r-\sumel \xb) \, \frac{\nu(\dd r)}{f_{\sumel \mathbf{m}}(r)}},
	\end{align*}
	as required.
	Next, we verify (\ref{rcondtwo}), the $\xib_s$-conditional law of $R_t$ for $0\leq s < t<1$.
	The process $\{R_t\}_{0\leq t \leq 1}$ is a $\widetilde{\Q}$-\levy process with $\widetilde{\mathbb{P}}(R_{t}\in \d r)=f_{(\sumel\mb) t}(r)\d r$, where $\widetilde{\Q}$ is given by (\ref{eq:RNDPlevy}). 
	Using Corollary \ref{radonnikodymglp} (or \cite{13}(Proposition 3.7)), we have $\{R_t\}_{0\leq t < 1}$ a $\P$-LRB, where
	\begin{align*}		
		\Q\left(R_t \in \dd r \,|\, \xib_s =\mathbf{x}\right)
		&=\Theta_{s}(r)^{-1}\,\E^{\widetilde{\Q}}\left(\Theta_t(r) {1}_{\{R_t\in\dd r\}} \,|\, R_s = \sumel\mathbf{x}\right) \nonumber \\
		&=\Theta_{s}(r)^{-1}\,\int_{\R} \frac{f_{\sumel \mathbf{m}(1-t)}(z-r)}{f_{\sumel \mathbf{m}}(z)} \, \nu(\dd z) f_{(t-s)\sumel\mb}(r-\sumel\mathbf{x})\d r,
	\end{align*}	
	as required.
\end{proof}

\subsection{Proposition \ref{harnessprop}}
\begin{proof}
	Conditional on $\xib_d$ $(0<d\leq 1)$, the coordinates of $\{\xib_t\}_{t\leq d}$ are (independent) \levy bridges, and $\{R_t\}_{t\leq d}$ is a \levy bridge.
	Thus, it is sufficient to prove that an integrable \levy bridge is a harness.
	Let $\{X_t\}_{0\leq t \leq 1}$ be a \levy process such that $X_t$ has a density $f_t$ for $t\in(0,1]$.
	We shall show that the conditional process, and \levy bridge, $\{X_t|X_1=k\}$ is a harness.
	The conditions of the proposition allow us to assume that $\{X_t|X_1=k\}$ is integrable.
	We start by computing the following:
	\begin{equation}
		\Q\left[\bigcap_{i=1}^{n_y} X_{t_i}\in\dd y_i\left|\,
			\left(\bigcap_{i=1}^{n_x} X_{a_i}=x_i \right) \cap \left(\bigcap_{i=1}^{n_z} X_{d_i}=z_i \right) \cap (X_1=k)\right.\right],
				\label{eq:bridge_harness}
	\end{equation}
	for any $n_x, n_y, n_z\in\N_+$, any $0=a_0<a_1<\cdots<a_{n_x}=a<t_1<\cdots<t_{n_y}<d=d_1<\cdots<d_{n_z}<1$, any $(x_1,\ldots,x_{n_x}) \in\R^{n_x}$,
	any $(y_1,\ldots,y_{n_x}) \in\R^{n_y}$, and any $(z_1,\ldots,z_{n_z}) \in\R^{n_z}$.
	Following the Bayes rule, the numerator is
	\begin{align*}
		I_1&=\Q\left[\left(\bigcap_{i=1}^{n_y} X_{t_i}\in\dd y_i\right) \cap
			\left(\bigcap_{i=1}^{n_x} X_{a_i}\in\d x_i \right) \cap \left(\bigcap_{i=1}^{n_z} X_{d_i}\in\d z_i \right) \cap (X_1\in \d k)\right] \nonumber \\
			&= \left(\prod_{i=1}^{n_x} f_{a_i-a_{i-1}}(x_i-x_{i-1}) \dd x_i \right) \\
			&\qquad \times \left(f_{t_1-a}(y_1-x_a)\dd y_1 \prod_{i=2}^{n_y} f_{t_i-t_{i-1}}(y_i-y_{i-1}) \d y_i\right) \nonumber\\
		  &\qquad \times\left(f_{d-t_n}(z_1-y_{n_y})\dd z_1 \prod_{i=2}^{n_z} f_{d_i-d_{i-1}}(z_i-z_{i-1}) \dd z_i\right) f_{1-d_{n_z}}(k-z_{n_z})\d k,
	\end{align*}
	and the denominator is
	\begin{align*}
		I_2&=\Q\left[\left(\bigcap_{i=1}^{n_x} X_{a_i}\in \d x_i \right) \cap \left(\bigcap_{i=1}^{n_z} X_{d_i}\in \d z_i \right) \cap (X_1 \in \d k)\right] \nonumber \\
			&= \left(\prod_{i=1}^{n_x} f_{a_i-a_{i-1}}(x_i-x_{i-1}) \dd x_i \right) \nonumber\\ 
			&\qquad \times \left(f_{d-a}(z_1-x_{n_x})\dd z_1 \prod_{i=2}^{n_z} f_{d_i-d_{i-1}}(z_i-z_{i-1}) \dd z_i\right) f_{1-d_{n_z}}(k-z_{n_z})\d k.
	\end{align*}
	So (\ref{eq:bridge_harness}) is equal to
	\begin{equation*}
		\frac{I_1}{I_2}=\prod_{i=2}^{n_y} \left(f_{t_i-t_{i-1}}(y_i-y_{i-1}) \d y_i\right)
									\frac{f_{t_1-a}(y_1-x_a)f_{d-t_n}(z_1-y_{t_{n_y}})\dd y_1}{f_{d-a}(z_1-x_{n_x})}.
	\end{equation*}
	It follows from the Kolgomorov Extension Theorem that $\{X_t|\mathcal{H}_{a,d}^X\}_{a\leq t \leq d}$ is a \levy bridge between $X_a$ and $X_d$.
	Define $\{\eta_t\}_{0\leq t \leq d-a}$ by $\eta_t=X_{a+t}-X_a$.
	Then $\{\eta_t|\mathcal{H}_{a,d}^X\}$ is \ \levy bridge from $0$ to $X_d-X_a$, and
	\begin{equation*}
		\E[\eta_t|\mathcal{H}_{a,d}^X]= \frac{t}{d-a}(X_d-X_a),
	\end{equation*}
	which yields the result.
\end{proof}

\subsection{Proposition \ref{measureSDEAll}}
\begin{proof}
Define the mapping $H: \R \times \R \times [0,1) \times \R_+ \rightarrow \R_+$ as follows:
\begin{align*}
H(z, y, t, m) = \exp\left\{\frac{z y-t z^2/2}{m^2(1-t)}\right\}.
\end{align*}
Since the Brownian bridges $\{\b^{(i)}_t\}_{0\leq t \leq 1}$, $i=1,\ldots,n$, in (\ref{blpindependentrep}) are mutually independent and $\{ \xib_{t} \}_{0\leq t \leq 1}$ is Markov, we have
\begin{align}
\label{numdem}
\Q(\xib_{1} \in \dd \textbf{x} \,|\, \mathcal{F}_t^{\xib}) &\law \frac{\prod_{i=1}^n H\left(x^{(i)}, \xi_{t}^{(i)}, t, \sigma\sqrt{m_i} \right) \, \Q(\xib_{1} \in \dd \textbf{x})}
									{\int_{\mathbb{R}^n}\prod_{i=1}^n H\left(x^{(i)}, \xi_{t}^{(i)}, t, \sigma\sqrt{m_i} \right) \, \Q(\xib_{1} \in \dd \textbf{x})}  \notag \\
&= \frac{\exp\left\{\sum_i^n \frac{x^{(i)} \xi_{t}^{(i)}-t (x^{(i)})^2/2}{\sigma^2 m_i(1-t)}\right\} \, \Q(\xib_{1} \in \dd \textbf{x})}
									{\int_{\mathbb{R}^n}\exp\left\{\sum_i^n \frac{x^{(i)} \xi_{t}^{(i)}-t (x^{(i)})^2/2}{\sigma^2 m_i(1-t)}\right\} \, \Q(\xib_{1} \in \dd \textbf{x})}.													
\end{align}
If we define the numerator of (\ref{numdem}) as the function
\begin{align}
\label{numalt}
g\left((\xi^{(i)}_{t})_{i=1,\ldots,n},t ; \dd \textbf{x} \right) = \exp\left\{\sum_i^n \frac{x^{(i)} \xi_{t}^{(i)}-t (x^{(i)})^2/2}{\sigma^2 m_i(1-t)}\right\} \, \Q(\xib_{1} \in \dd \textbf{x}),
\end{align}
and apply It\^o's formula to (\ref{numalt}), we get
\begin{align*}
\dd g &= \frac{\partial g}{\partial t} \dd t + \sum_{i=1}^n \frac{\partial g}{\partial \xi_{t}^{(i)}} \dd \xi_{t}^{(i)} +\frac{1}{2}\sum_{i=1}^n \frac{\partial^2 g}{\partial (\xi_{t}^{(i)})^2} \dd \left\langle \xi^{(i)}_{t}\right\rangle + \sum_{i\neq j}^n \frac{\partial^2 g}{\partial \xi_{t}^{(i)}\partial \xi_{t}^{(j)}} \dd \left\langle \xi^{(i)}_{t}, \xi_{t}^{(j)}\right\rangle, \notag \\
&= g\left((\xi^{(i)}_{t})_{i=1,\ldots,n},t ; \dd \textbf{x} \right) \left(\sum_{i=1}^n \frac{x^{(i)}\xi^{(i)}_{t}}{(\sigma^2m_i(1-t)^2}\dd t + \sum_{i=1}^n\frac{x^{(i)}}{\sigma^2 m_i(1-t)}\dd \xi^{(i)}_{t} \right),
\end{align*}
where the covariation brackets $\left\langle \xi^{(i)}_{t}, \xi^{(j)}_{t} \right\rangle$ for $i\neq j$ disappear since the $\{\b^{(i)}_t\}_{0\leq t \leq 1}$, $i=1,\ldots,n$, are mutually independent. Let $G\left((\xi^{(i)}_{t})_{i=1,\ldots,n},t \right) = \int_{\mathbb{R}^n}g\left((\xi^{(i)}_{t})_{i=1,\ldots,n},t ; \dd \textbf{x} \right)$; then, using Fubini's theorem,
\begin{align*}
\dd G\left((\xi^{(i)}_{t})_{i=1,\ldots,n},t \right) = G\left((\xi^{(i)}_{t})_{i=1,\ldots,n},t \right) \left(\sum_{i=1}^n \frac{\E(\xi_1^{(i)}\,|\, \xib_t)\xi^{(i)}_{t}}{\sigma^2 m_i (1-t)^2} + \sum_{i=1}^n\frac{\E(\xi_1^{(i)}\,|\, \xib_t)}{\sigma^2 m_i (1-t)}\dd\xi^{(i)}_{t}\right),
\end{align*}
The statement follows by applying It\^o's formula to the ratio (\ref{numdem}), where we get
\begin{align}
\label{measurevaluedsdeexp}
\dd \phi_t(\dd \textbf{x}) &= \phi_t(\dd \textbf{x})\left(\sum_{i=1}^n \frac{x^{(i)}-\E(\xi_1^{(i)}\,|\, \xib_t)}{\sigma^2 m_i (1-t)} \left(\dd \xi^{(i)}_t - \frac{\E(\xi_1^{(i)}\,|\, \xib_t) - \xi^{(i)}_t}{(1-t)}\right) \right) \notag \\
&\triangleq\phi_t(\dd \textbf{x})\left(\sum_{i=1}^n \sigma_t^{(i)}\dd \tilde{B}_t^{(i)}\right) \notag.
\end{align}
Writing $\tilde{\textbf{B}}_t=(\tilde{B}_t^{(1)},\ldots,\tilde{B}_t^{(n)})^\tp$, define $\{\textbf{B}_t\}_{0\leq t < 1}$ by $\tilde{\textbf{B}}_t=\sigma \sqrt{\mathbf{m}}\circ\textbf{B}_t$. That is, $B_t^{(i)}=\tilde{B}_t^{(i)}/\sigma\sqrt{m_i}$.
For each $i\in\{1,\ldots, n\}$, $\{\xi^{(i)}_t\}_{0\leq t \leq 1}$ is an LRB, and so following similar steps to the proof of  Proposition \ref{canonicalrep}, we can show that $\{B_t^{(i)}\}_{0\leq t < 1}$ is continuous with quadratic variation $t$ and is an $(\mathcal{F}_t^{\xib},\Q)$-martingale.
Then, from L\'evy's characterisation, $\{\textbf{B}_t\}_{0\leq t < 1}$ is a vector of standard $(\mathcal{F}_t^{\xib},\Q)$-Brownian motions.
\end{proof}

\subsection{Proposition \ref{poiossonmarg}}
\begin{proof}
Let $\{\xi_t^{(i)}: i=1,\ldots,n\}_{0\leq t \leq 1}$ be the coordinates of the Poisson Liouville process $\{\xib_t\}_{0\leq t \leq 1}$. 
The survival function of $T^{(i)}$ is
\begin{align*}
	\P(T^{(i)}>s) &= \P(\xi^{(i)}_s = 0) \notag
	\\ &= \E\left( \P\left(\left.\xi_s^{(i)} = 0 \,\right| \sumel \xib_1 \right)\right) \notag
	\\ &= \E\left( (1-s p_i)^{\sumel \xib_1} \right) \notag
	\\ &= G_{\nu}(1-s p_i).
\end{align*}
For $\mathbf{s}\in [0,1]^n$, the joint survival function of $\mathbf{T}$ is
\begin{align*}
	\P(T^{(i)}> s_i; i=1,\ldots,n)&=\P(\xi^{(i)}_{s_i}=0; i=1,\ldots,n) \notag
	\\ &=\E\left(\P(\xi^{(i)}_{s_i}=0; i=1,\ldots,n \,|\, \xib_1)\right) \notag
	\\ &=\E\left( \prod_{i=1}^n \P(\xi^{(i)}_{s_i}=0 \,|\,\xi^{(i)}_1) \right) \notag
	\\ &=\E\left( \prod_{i=1}^n(1-s_i)^{\xi^{(i)}_1} \right) \notag
	\\ &=\E\left( \left( \sum_{i=1}^n p_i(1-s_i) \right)^{\sumel \xib_1} \right) \notag
	\\ &=G_{\nu}\left( \sum_{i=1}^n p_i(1-s_i) \right) \notag
	\\ &=G_{\nu}\left( 1-\sum_{i=1}^n p_i s_i \right),
\end{align*}
which gives the statement.
\end{proof}

%
%
%
%

\end{document}